\DeclareMathOperator{\Coh}{\textbf{Coh}}
\DeclareMathOperator{\NS}{NS}
\DeclareMathOperator{\Stab}{Stab}
\title{A birational involution}
\author{Pietro Beri, Laurent Manivel}
\date{}
\address{Institut Elie Cartan de Lorraine, Université de Lorraine et CNRS, 54000 Nancy, France}
\email{pietro.beri@univ-lorraine.fr}
\address{Institut de Math\'ematiques de Toulouse ; UMR 5219, Universit\'e de Toulouse \& CNRS, F-31062 Toulouse Cedex 9, France}
\email{manivel@math.cnrs.fr}
\begin{document}

\theoremstyle{plain}
\newtheorem{theorem}{Theorem}
\newtheorem*{conjecture*}{Conjecture}
\newtheorem{prop}[theorem]{Proposition}
\newtheorem{lemma}[theorem]{Lemma}
\newtheorem{coro}[theorem]{Corollary}

\def\AA{{\mathbb{A}}}
\def\CC{{\mathbb{C}}}
\def\RR{{\mathbb{R}}}
\def\OO{{\mathbb{O}}}
\def\HH{{\mathbb{H}}}
\def\PP{{\mathbb{P}}}
\def\QQ{{\mathbb{Q}}}
\def\ZZ{{\mathbb{Z}}}
\def\SS{{\mathbb{S}}}
\def\cO{{\mathcal{O}}}
\def\cC{{\mathcal{C}}}\def\cA{{\mathcal{A}}}
\def\cD{{\mathcal{D}}}
\def\cE{{\mathcal{E}}}
\def\cF{{\mathcal{F}}}
\def\cG{{\mathcal{G}}}
\def\cH{{\mathcal{H}}}
\def\cI{{\mathcal{I}}}
\def\cJ{{\mathcal{J}}}
\def\cK{{\mathcal{K}}}
\def\cL{{\mathcal{L}}}
\def\cP{{\mathcal{P}}}
\def\cQ{{\mathcal{Q}}}
\def\cR{{\mathcal{R}}}
\def\cT{{\mathcal{T}}}
\def\cU{{\mathcal{U}}}
\def\cV{{\mathcal{V}}}\def\cW{{\mathcal{W}}}
\def\cZ{{\mathcal{Z}}}\def\cX{{\mathcal{X}}}
\def\ra{{\rightarrow}}
\def\lra{{\longrightarrow}}
\def\ft{{\mathfrak t}}\def\fs{{\mathfrak s}}\def\fn{{\mathfrak n}}
\def\fg{{\mathfrak g}}\def\fp{{\mathfrak p}}\def\fb{{\mathfrak b}}
\def\fso{\mathfrak{so}}
\def\fsp{\mathfrak{sp}}
\def\fsl{\mathfrak{sl}}\def\fgl{\mathfrak{gl}}

\let\Iff\iff
\def\iff{\Leftrightarrow}

\begin{abstract}
    Given a \textcolor{black}{very} general K3 surface $S$ of degree $18$, lattice theoretic considerations allow to predict the existence of an anti-symplectic birational involution $\varphi$ of the Hilbert cube $S^{[3]}$. We describe this 
    involution in terms of the Mukai model of $S$,  \textcolor{black}{with the help of the geometry of the exceptional Lie group $G_2$}. We make a connection with Homological Projective Duality by showing that the indeterminacy locus of the involution is  a $\PP^2$-fibration over the dual K3 surface of degree two. 
    We deduce that $\varphi$ is an instance of a Mukai flop. 
\end{abstract}

\maketitle

\tableofcontents 

\section{Introduction}

Consider 
a complex algebraic $K3$ surface $S$ whose Picard group is generated by an ample line bundle $\mathcal{H}$ such that $\mathcal{H}^2=2t$ 
(a very general element of the $19$-dimensional moduli space of $2t$-polarized $K3$ surfaces). 
A classification of the group of biregular automorphisms of the punctual Hilbert scheme $S^{[n]}:= Hilb^n(S)$ has been given by Boissière, An. Cattaneo, Nieper-Wi\ss kirchen and Sarti \cite{bcns} for $n= 2$, and by Al. Cattaneo for all $n\ge 3$ \cite{cattaneo}. In particular, $Aut(S^{[n]})$ is either trivial or generated by an involution which is non-symplectic for $t\geq 2$.
More recently, extended lattice theoretic considerations allowed Al. Cattaneo and the first author to decide whether $S^{[n]}$ admits non-trivial birational automorphisms \cite[Theorem 1.1]{beri}, symplectic or not.  When it does, there exists a unique 
such automorphism, a birational involution that may be symplectic or non symplectic. The precise result is the following:

\begin{theorem}\label{abstract classification}
Suppose $t\ge 2$. 
Then $Bir(S^{[n]})$ is either trivial or generated by a birational involution $\varphi$. The latter holds 
if and only if:
\begin{enumerate}
    \item $d=t(n-1)$ is not a square,
    \item the minimal non-trivial solution $(X,Y)$ of Pell’s equation $$ X^2-dY^2= 1 \quad with\quad  X=\pm 1 \; mod \;n-1$$
    is such that 
 $Y$ is even and  $(X,X)=(1,1),(1,-1)$ or $(-1,-1)$ in $\ZZ/2(n-1)\ZZ\times\ZZ/2t\ZZ$.
\end{enumerate} 
\end{theorem}

When $(X,X)=(1,1)$, the involution is symplectic and never biregular. When $(X,X)=(\pm 1,-1)$, it is always non-symplectic, and biregular if and only if it fixes an ample class of $S^{[n]}$; for that case  the classification can be found in \cite{bcns,cattaneo}, the precise condition for regularity being condition (iii) of \cite[Theorem 1.3]{cattaneo}.

Such a statement does not provide much insight on how to construct this birational involution geometrically, when it does exist. 
Some cases have been explicitly described in \cite[Examples 6.1, 6.2]{beri}, for $t=2$ and $n=6, 8, 18$. 
A few other cases had been known before:
\begin{itemize}
    \item Suppose $t=n$, so that $\mathcal{H}$ embeds $S$ into $\PP^{n+1}$, as a surface of  degree $2n$. 
    If we choose $n$ points $p_1,\ldots , p_n$ in $S$ in general linear position, 
    they generate a codimension two subspace of $\PP^{n+1}$, which in general cuts $S$ at $2n$ distinct points $p_1,\ldots , p_n$
    and $q_1,\ldots , q_n$. This yields the {\it Beauville involution}, which is non-symplectic, and 
    biregular only for $n=2$ (that is for quartics in $\PP^3$). 
    \item Suppose $n=2$ and $t=5$, so that $S$ is embedded in the Grassmannian $G(2,5)$ as the transverse intersection of 
     a quadric $Q$ and three hyperplanes $H_1, H_2, H_3$. In particular, the linear span of $S$ intersects the Grassmannian along  $F=G(2,5)\cap H_1\cap H_2\cap H_3$, 
     which is a smooth Fano threefold of index two. Now consider two general points $p_1,p_2$ in $S$, defining two transverse
     planes $P_1, P_2$ in $\CC^5$. The Grassmannian $G(2,P_1\oplus P_2)$ is a four dimensional quadric in $G(2,5)$, its intersection with
     $F$ is a conic, and its intersection with $S=F\cap Q$ therefore consists in general of four points $p_1,p_2$ and $q_1,q_2$. 
     This yields the O'Grady involution \cite[section 4.3]{ogrady}.
      \item Suppose $n=3$ and again $t=5$,
 so that $S$ can be described as before. Given three general points $p_1,p_2,p_3$ in $S$, defining three 
     planes $P_1, P_2, P_3$ in $\CC^5$, or equivalently three lines $\ell_1, \ell_2, \ell_3$ in $\PP^4$,
 it is a classical fact that there exists in general a unique line  $\ell_0$ meeting these three lines. 
 Equivalently, if $\Sigma_\ell\subset G(2,5)$
 is the codimension two Schubert cycle parametrizing lines in $\PP^4$ that meet a given line $\ell$, then $\Sigma_{\ell_1}.\Sigma_{\ell_2}.\Sigma_{\ell_3}=1$. Since $\Sigma_\ell$ has degree three, the intersection $\Sigma_{\ell_0}\cap S$
 consists of the three points $p_1,p_2,p_3$ (since $\ell_0\in \Sigma_{\ell_i}$ is equivalent to  $\ell_i\in \Sigma_{\ell_0}$), and 
 three other points $q_1,q_2,q_3$ \cite[Example 4.12]{debarre}.
  \end{itemize}
  
  \medskip
In fact, there are some easily identified infinite sequences of pairs $(n,t)$, described in \cite[Prop. 2.6 (i)]{beri}, that  satisfy the conditions of Theorem $1$: fix any $k>0$ and take $t=(n-1)k^2+1$,  for 
any  $n\geq 2$. For $k=1$ we recover the Beauville involution. For $k=2$ only the case $n=2$ has
been described, this is the O'Grady involution. In this paper, we describe the next
case,  $n=3$ and $t=9$. 

\smallskip
A general $K3$ surface of degree $2t=18$, or equivalently of genus $10$, admits a Mukai model: it can be described as a codimension three 
linear section of 
the adjoint variety of the exceptional Lie group $G_2$, the closed $G_2$-orbit inside the projectivized adjoint representation
\cite{mukai}. 
We denote this five dimensional homogeneous space by $X_{ad}(G_2)\subset \PP(\fg_2)$, so that 
$$S=X_{ad}(G_2)\cap L$$
for some codimension three linear subspace $L$ of $\PP(\fg_2)$.

Recall  that $G_2$ can be described as a subgroup of $SO(7)$  \cite{baez}. This means in particular that $G_2$ has a natural representation $V_7$
of dimension $7$, which is irreducible, and that it preserves some non-degenerate quadratic form $Q$. Since then $\fg_2\subset \fso_7\simeq \wedge^2V_7$, we deduce that the adjoint variety $X_{ad}(G_2)$, which is the minimal orbit in $ \PP(\fg_2)$, must be a subvariety
of $G(2,V_7)$. There is a diagram
$$\begin{array}{ccccc}
  X_{ad}(G_2) & \hookrightarrow & OG(2,V_7) & \hookrightarrow &  G(2,V_7) \\
  \downarrow  & &\downarrow  & & \downarrow \\
  \PP(\fg_2) & \hookrightarrow  & \PP(\wedge^2V_7)& =  & \PP(\wedge^2V_7)
\end{array}$$
where the vertical arrows are embeddings. In fact, the Mukai model for $S$ is constructed from the unique stable vector bundle $\cU_2^\vee$ on $S$ with Mukai vector $(2,\cH,5)$, where $\cH^2=18$: this bundle is generated by its seven global sections, which embed $S$ into $G(2,V_7)$.

\medskip
So let $p_1,p_2,p_3$ be generic points on $S$. They can be identified with three planes $P_1,P_2,P_3\subset V_7$. In general, these
three planes generate a hyperplane $V_6\subset V_7$, \textcolor{black}{and $F\ell(V_6):=X_{ad}(G_2)\cap G(2,V_6)$  is a copy of the complete flag variety $F\ell_3$ of $\fsl_3$. }  
Since $F\ell_3$ has dimension three and degree $6$, its intersection with $L$  consists in general 
of six distinct points,  $p_1,p_2,p_3$.
plus three other points $q_1,q_2,q_3$.  
We define
\begin{equation}\label{phi}
\begin{array}{l}
    \varphi:S^{[3]}  \dashrightarrow  S^{[3]},\qquad 
    p_1+p_2+p_3      \mapsto    q_1+q_2+q_3.
\end{array}
\end{equation}
Note that,  at this stage, we can consider any K3 surface $S$ of genus ten whose projective model is a three dimensional section of $X_{ad}(G_2)$. 

\begin{theorem}\textcolor{black}{\textit{(Theorem \ref{involution})}}\label{invo}
For any 
K3 surface $S=X_{ad}(G_2)\cap L$, $\varphi$ is a non-trivial birational involution of $S^{[3]}$.
\end{theorem}

We also provide another interpretation of $\varphi$ in terms of the extremal ray $\mathcal{H}_3-2\delta$ 
of $S^{[3]}$, where $\mathcal{H}_3$ is the line bundle induced by the polarization $\mathcal{H}$ of $S$ and $2\delta$ is,
as usual, the class of the locus of non-reduced subschemes. 

\begin{theorem}\textcolor{black}{(Theorem \ref{basepointfree}, Proposition \ref{degree fiber}, Corollary \ref{phi is covering inv})}\label{bpf}
Let $S$ be a very general K3 surface of genus ten.
The linear system  $|\mathcal{H}_3-2\delta|$ on the punctual Hilbert scheme $S^{[3]}$ is base point free.
The associated morphism $\phi_{|\mathcal{H}_3-2\delta|}$ is generically finite of degree two, and the birational involution $\varphi$ is the corresponding deck transformation.
\end{theorem}

Moreover the indeterminacy locus $I\subseteq S^{[3]}$ of $\varphi$ can be described in terms of 
subschemes of length three that do not span
a hyperplane in $V_7$. Such subschemes are obtained from certain cubic scrolls in the adjoint variety, parametrized 
by what we call decomposing five-planes: those codimension two subspaces of $V_7$ over which the $G_2$-invariant 
three-form is completely decomposed. \textcolor{black}{Part of our arguments rely on a good understanding of these codimension two subspaces.} We show:

\begin{prop}\textit{(Corollary \ref{iota})}\label{projeq}
If $\omega\in \bigwedge^3 V^\vee_7$ is a $G_2$-invariant 3-form, then the subvariety of $G(5, V_7)$ 
parameterizing $\omega$-decomposing five-planes is projectively equivalent to the orthogonal Grassmannian $OG(2,7)$.
\end{prop}

This statement is closely related to a recent paper by Guseva \cite{guseva}.

\medskip
Cubic scrolls in $X_{ad}(G_2)$ and its hyperplane sections were  studied by M. Kapustka and Ranestad \cite{kr}; they proved the remarkable fact that a smooth 
hyperplane section $F=X_{ad}(G_2)\cap H$ is  isomorphic to a linear section of the Grassmannian $G(3,U_6)$, and deduced that cubic scrolls in $F$ are parametrized by the disjoint union of two Veronese surfaces. 
We integrate their description with results from \cite{kuznetsov-hpd}, thus extending it to the case where $F$ is a nodal hyperplane section of $X_{ad}(G_2)$.

When $H$ contains $L$, the section $F$ contains the K3 surface $S$ and we can pull-back the tautological and quotient bundles on $G(3,U_6)$ to get two vector bundles on $S$; but only one when we pull-back from $LG(3,U_6)$, where the  tautological and quotient bundles gets identified. This realizes the moduli space $M_\cH(3,\cH, 3)$ of $\cH$-stable vector bundles on $S$ with the corresponding invariants, as a smooth double cover of $L^\perp\simeq\PP^2$,  branched over its intersection with the dual hypersurface to $X_{ad}(G_2)$, which is  a sextic hypersurface. This double cover, described in \cite{kr}, is precisely the degree two K3 surface $\Sigma$, derived equivalent to $S$ up to the twist by a Brauer class, that appears in Kuznetsov's Homological Projective Duality for  $X_{ad}(G_2)$ 
 \cite[Corollary 9.10]{kuznetsov-hpd}. Note that although the moduli space
 $\Sigma=M_\cH(3,\cH, 3)$ is not fine, there exists a  twisted universal bundle 
$\cF$ on $S\times\Sigma$.
 With this in mind, we can describe the indeterminacy locus:

\begin{theorem}\textit{(Proposition \ref{P=I})}
\label{structure}
Let $S$ be a very general K3 surface of genus ten.
The indeterminacy locus $I$ of $\varphi$ on the punctual Hilbert scheme $S^{[3]}$ is a smooth fourfold with an étale locally trivial $\PP^2$-fibration over the smooth degree two $K3$ surface $\Sigma$, given either by 
\begin{enumerate}
\item the locus of length-three subschemes of $S$ contained in $G(2,V_5)$ for some decomposing five-plane $V_5\subset V_7$;
    \item the family of cubic scrolls on hyperplane sections of $X_{ad}(G_2)$ containing $S$;
    \item the projectivization $\PP_\Sigma(\cG)$ of the twisted vector bundle $\cG=p_{2*}{\mathcal Hom}(\cF,p_1^*\cU_2)$
over $\Sigma$, where $p_1$  and $p_2$ are the projections from $S\times\Sigma$ to $S$ and $\Sigma$, respectively.
    \end{enumerate}
\end{theorem}

Finally, since $I$ is a smooth codimension two subvariety of $S^{[3]}$, fibered in 
$\PP^2$'s, we can perform a Mukai flop to obtain another smooth, simply connected sixfold $M$ with a non-degenerate two-form; but a priori not necessarily hyperKähler. It turns out that $M$ is in fact isomorphic with $S^{[3]}$ and that, up to identifying the two models, the Mukai flop is nothing else than our involution $\varphi$!

\begin{theorem}\label{intro: flopdeMukai} \textit{(Corollary \ref{flopdeMukai})}
For $S^{[3]}$ the punctual Hilbert scheme over a very general K3 surface of genus ten,
$\varphi$ is a Mukai flop.
\end{theorem} 

The techniques we use are a mixture of $G_2$-geometry with moduli interpretations and stability conditions. 
Basic facts about $G_2$ are recalled in section $2$, where we also discuss decomposing five-planes and cubic scrolls. 
The involution $\varphi$ is defined in section $3$,
where we prove Theorem \ref{invo} and we put into the picture the extremal contraction induced by a multiple of $\cH_3-2\delta$, which ultimately leads to Theorem \ref{intro: flopdeMukai}. These two results can be seen as different descriptions of $\varphi$ and the key point to identify them is Theorem \ref{intro: flopdeMukai}.
The connections with the linear system $|\mathcal{H}_3-2\delta|$ itself are discussed in section $4$: we first prove base point freeness, then that the associated morphism is generically finite of degree two. 

\smallskip All along we will find strong analogies, but also important differences, with the story 
of Gushel-Mukai varieties and (double) EPW sextics, a story which includes the O'Grady involution. Some of those observations are gathered in section \ref{defo}, where we also briefly discuss the indirect and intriguing connections, also present in  \cite{kuznetsov-hpd}, of some of our constructions with the Grothendieck-Springer resolution. Being 
able to upgrade the present work in order to construct a new locally complete family
of polarized hyperK\"ahler manifolds, is one of the challenges that remain to be met. 

\smallskip 
One can also wonder to which extent our constructions can be generalized. The definition of our birational involution still makes sense for certain punctual Hilbert schemes of K3 surfaces with different polarizations, and the geometric study presented in this paper can be partially extended, although we of course lose all the interpretations we gave in terms of $G_2$-geometry \cite{more}. Parallel constructions, presented in the more flexible setting of derived categories of moduli spaces of sheaves on K3's, were very recently presented in \cite{faenzi-menet}. 
But the case of K3 surfaces of genus ten, with its so many facets,  will certainly remain a unique gem.

\medskip\noindent {\it Acknowledgements.} We warmly thank G. Kapustka, E. Macri,  G. Mongardi, K. O'Grady
for their comments and hints, and A. Kuznetsov for his careful reading of a first draft of the manuscript. \textcolor{black}{We also thank an anonymous referee for the lucid comments and insightful
suggestions}, that allowed to improve the paper substantially. In particular the proof of 
Theorem \ref{basepointfree} presented in section 7 is entirely due to her and is completely different from our 
original argument.
We also thank C. Bai, D. Mattei, M. Varesco for useful discussions. 

P. Beri is supported by the ERC Synergy Grant HyperK, agreement ID 854361.
L. Manivel acknowledges support from the ANR project FanoHK, grant
ANR-20-CE40-0023.

\section{Some $G_2$ geometry} 

\subsection{Classical preliminaries}
\textcolor{black}{A useful, wide-ranging reference for the exceptional Lie group $G_2$ is \cite{baez}.}
There are two classical ways to understand $G_2$, which may be both useful according to the context. 
Here we work over the complex numbers, and most of what follows is translated from the real setting.
In particular, each time in the sequel we deal with octonions and the Cayley algebra $\OO$, we will in fact 
be dealing with the {\it complexified} octonions and the {\it complexified} Cayley algebra. We will not
use any special notation for this complexification, hoping this will not cause any confusion. 
\begin{enumerate} 
\item (Cartan 1914) $G_2$ can be defined as the 
automorphism group of the Cayley algebra of octonions. Since the Cayley algebra $\OO$ is normed, it admits a $G_2$-invariant 
quadratic form $Q$, and it splits orthogonally as $\OO=\CC {\bf 1} \oplus V_7$, where $V_7$ is the space of imaginary octonions. 
\item (Engel 1900) $G_2\subset SL(V_7)$ can be defined as the stabilizer of a generic skew-symmetric three-form $\omega\in\wedge^3V_7^\vee$. Then $V_7$ inherits an invariant quadratic form, defined up to scalar by 
$$Q(x,y)=\iota_x\omega\wedge \iota_y\omega\wedge \omega \in \wedge^7V_7^\vee\simeq \CC,$$
where $\iota_x$ denotes the contraction by the vector $x$.
Conversely, on the space of imaginary octonions the skew-symmetric three-form is given by 
the formula $\omega (x,y,z)=\mathrm{Re}(x(yz))$ (where $\mathrm{Re} $ must be interpreted as the projection on the invariant part in the decomposition $\OO=\CC\oplus V_7$). 
From this three-form one easily recovers the octonionic product. 
\end{enumerate}

The two fundamental representations of $G_2$ are $V_7$ and the adjoint representation on the Lie algebra $\fg_2$, whose dimension is $14$. 
The two associated generalized Grassmannians (orbits of highest weight vectors) are $G_2/P_1=\QQ^5\subset \PP(V_7)$, the invariant quadric, and the 
adjoint variety $X_{ad}(G_2)=G_2/P_2\subset\PP(\fg_2)$. There is a fundamental correspondence 
\begin{equation}\label{fundamental}
\xymatrix{
 & G_2/B\ar@{->}[ld]_{p_\QQ}\ar@{->}[rd]^{p_X} & \\
 \QQ^5 & & X_{ad}(G_2),
}
\end{equation}
where the two projections are $\PP^1$-fibrations over Fano fivefolds, $\QQ^5$ of index five and $X_{ad}(G_2)$ of index three; both of them dominated by the flag manifold $G_2/B$, where $B$ is a Borel subgroup containing the maximal parabolic subgroups $P_1$ and $P_2$. 

Since $\fg_2\subset\fso_7 \simeq\wedge^2V_7$, the adjoint variety  $X_{ad}(G_2)\subset \PP(\fg_2)$ is a subvariety of $G(2,V_7)\subset\PP(\wedge^2V_7)$,
hence parametrizes some special planes. According to our two viewpoints:
\begin{enumerate} 
\item $X_{ad}(G_2)$ parametrizes the family of {\it null-planes} in $V_7$, that is, those planes $P\subset V_7=Im(\OO)$
on which the octonionic product vanishes identically \cite{lm-freud, cg}.
\item Equivalently, $X_{ad}(G_2)$ parametrizes the planes $P=\langle x,y\rangle \subset V_7$ such that $\iota_x\iota_y\omega=\omega(x,y,\bullet)=0$. 
Let $\cU_2$ and $\cQ_5$ denote the tautological and quotient vector bundles 
over $G(2,V_7)$; 
this shows that $\omega$
defines a global section of the  bundle $\wedge^2\cU_2^\vee\otimes\cQ_5^\vee = \cQ_5^\vee(1)$,  whose zero-locus is precisely the adjoint variety \cite[Section 8]{kuznetsov-hpd}.
\end{enumerate}
For our purposes, we will mostly adopt this second point of view, but it may be useful to keep the first one in mind as well.

Note that in diagram (\ref{fundamental}), $G_2/B$ is the projectivized bundle $\PP(\cU_2)$ over $X_{ad}(G_2)$. Similarly, it is the projectivization of a $G_2$-equivariant rank two 
bundle $\mathcal{C}$ on $\QQ^5$ called the {\it Cayley bundle} \cite{ottaviani-cayley}, and such that 
$$H^0(\QQ^5,\cC^\vee)=\fg_2^\vee\simeq\fg_2.$$ 

\smallskip
As in \cite{cg} we fix a Cartan subalgebra $\ft\subset\fg_2$, which yields a root space decomposition, and we denote by $\alpha, \beta , \gamma$ three 
short roots summing to zero. Then $\alpha-\beta$, $\beta-\gamma$, $\gamma-\alpha$ are three long 
roots also summing to zero, and we get the twelve roots of $\fg_2$ by including the opposites of those  six roots.

\medskip
In fact, once we have fixed a \textcolor{black}{Cartan subalgebra} in $\fg_2$ and obtained the previous decomposition, we also get a 
basis of $V_7$ consisting of weight vectors:
$$V_7=\CC e_0\oplus \CC e_{\alpha}\oplus \CC e_{-\alpha}\oplus 
\CC e_{\beta}\oplus \CC e_{-\beta}\oplus 
\CC e_{\gamma}\oplus \CC e_{-\gamma}$$
\textcolor{black}{(where $e_\alpha$ has weight $\alpha$, etc.),}
and we can normalize so that the  invariant three-form $\omega$ and the invariant 
quadratic form $Q$ have the following expressions 
in the dual basis: 
$$\omega = v_0\wedge (v_\alpha\wedge v_{-\alpha}+v_\beta\wedge v_{-\beta}+v_\gamma\wedge v_{-\gamma})
+v_\alpha\wedge v_\beta\wedge v_\gamma+v_{-\alpha}\wedge v_{-\beta}\wedge v_{-\gamma},$$
$$Q=-v_0^2+v_{\alpha}v_{-\alpha}+v_{\beta}v_{-\beta}+v_{\gamma}v_{-\gamma}.$$

\smallskip Note in particular that all our basis vectors, except $e_0$, are 
isotropic with respect to $Q$. Note also that $Q$ being non-degenerate, it allows to identify $V_7$ and $V_7^\vee$. In particular we will use the same symbol $\perp$ for the orthogonal in $V_7$ or $V_7^\vee$, with respect to $Q$ or to the canonical duality.

\setlength{\unitlength}{2mm}
\begin{picture}(20,38)(2,1)
\put(30,20){\vector(1,0){10}}
\put(30,20){\vector(1,2){4.5}}
\put(30,20){\vector(1,-2){4.5}}
\put(30,20){\vector(-1,0){10}}
\put(30,20){\vector(-1,2){4.5}}
\put(30,20){\vector(-1,-2){4.5}}

\thicklines
\put(30,20){\vector(0,1){17}}
\put(30,20){\vector(2,1){17}}
\put(30,20){\vector(-2,1){17}}
\put(30,20){\vector(0,-1){17}}
\put(30,20){\vector(2,-1){17}}
\put(30,20){\vector(-2,-1){17}}

\put(35.5,28){$\alpha$}\put(18,19.6){$\beta$}\put(35.5,11.5){$\gamma$}
\put(47.7,28){$\alpha-\beta$}\put(7.7,28){$\beta-\gamma$}\put(27.7,1){$\gamma-\alpha$}
\end{picture}

\centerline{\scriptsize{ The  root system of type $G_2$}}

\subsection{The action of $G_2$ on imaginary octonions}
The action of $G_2$ on $V_7$ provides a classical example of a prehomogeneous space. 
The following result is essentially \cite[Proposition 42]{sk}.

\begin{prop}\label{sphere} The action of $G_2$ on $\PP(V_7)$ has two orbits, 
the five dimensional quadric $\QQ^5$ defined by $Q$, 
and its complement $$\PP(V_7)-\QQ^5\simeq G_2/H,$$
where $H$ is locally isomorphic to $SL_3$.
\end{prop} 

(Here locally isomorphic means that the two groups have the same Lie algebra.) For example, the Lie algebra of the stabilizer of $[e_0]$ is the subalgebra of $\fg_2$
generated by the long root spaces: this is clearly a subalgebra,  since adding two long roots 
never gives a short root, of type $A_2$, and it certainly kills $e_0$ since no weight of $V_7$ is a long root. 

\medskip\noindent {\it Remark.}
One of the beautiful properties of the real compact form of $G_2$, which is the automorphism group of the usual octonions with real coefficients, 
is that it acts transitively on the six-dimensional sphere, seen as the set of imaginary octonions of unit norm. 
This leads to the well-known identification
of the six-dimensional sphere with the quotient space $G_2(\RR)/SU_3$, and 
to the fact that $G_2(\RR)$ can be seen
as the total space of a $SU_3$-principal bundle over $S^6$. Note that over the real numbers, the sphere is a double cover of the projective space of imaginary octonions. Over the complex
numbers, the corresponding statement is that the generic stabilizer $H$ is isomorphic to $Aut(SL_3)$, which is  a degree two extension of $SL_3$. We will not use this fact in the sequel.
\medskip

Taking orthogonals with respect to the quadratic form $Q$, Proposition \ref{sphere}
also means that  there are only two types of hyperplanes $V_6\subset V_7$ up to the action of $G_2$: they can either be transverse or tangent to the quadric. Another convenient way to distinguish them is by the restriction 
of the invariant form $\omega$. 

If $V_6=v_0^\perp$, then
$$\omega_{|V_6}=v_\alpha\wedge v_\beta\wedge v_\gamma+v_{-\alpha}\wedge v_{-\beta}\wedge v_{-\gamma}$$
belongs to the open $GL(V_6)$-orbits in $\PP(\wedge^3V_6^\vee)$.  
The two planes $\PP\langle e_{\alpha}, e_{\beta}, e_{\gamma}\rangle$ and $\PP\langle e_{-\alpha}, e_{-\beta}, e_{-\gamma}\rangle$ are uniquely determined (see e.g. \cite[3.1]{kuznetsov-c5}): this is the  {\it one apparent double point property} of the Grassmannian $G(3,V_6)$.

If $V_6=v_{-\gamma}^\perp$, then
$$\omega_{|V_6}=v_0\wedge v_\alpha\wedge v_{-\alpha}+v_0\wedge v_\beta\wedge v_{-\beta}+v_\alpha\wedge v_\beta\wedge v_\gamma$$
belongs to the tangent space to the Grassmannian $G(3,V_6^\vee)$
at the uniquely defined plane $\PP\langle v_0, v_\alpha, v_\beta
\rangle$.
(Unicity follows from the facts that, $G(3,V_6^\vee)$ being Legendrian  \cite{buczynski}, its tangent and dual varieties are identified, and that a generic singular hyperplane section is singular at a unique point.)

\medskip Denote by $\fsl_3\subset\fg_2$ the annihilator of $e_0\in V_7$. This is also the Lie algebra of the
group $H$ from Proposition \ref{sphere}.

\begin{prop}\label{compatible}
The intersection of $X_{ad}(G_2)\subset \PP(\fg_2)$ with $\PP(\fsl_3)$ is a copy of 
$F\ell_3\subset \PP(\fsl_3)$, 
the variety of complete flags in a three-dimensional space, which is also the 
adjoint variety of $\fsl_3$.
\end{prop}

\proof 
Let $V_6:=v_0^\perp$. Recall that $X_{ad}(G_2)$ is defined in $G(2,V_7)$ as the zero-locus of the section $s_\omega$ of $\cQ_5^\vee(1)$  defined by $\omega$. Since an element of the cone over $G(2,V_7)$, considered as an element of $\fso_7\simeq \wedge^2V_7$, kills $e_0$ if and only if it is contained in $G(2,V_6)$, we have 
\begin{equation}\label{X cap sl3 = X cap G(2,6)}
X_{ad}(G_2)\cap\PP(\fsl_3)=X_{ad}(G_2)\cap G(2,V_6).    
\end{equation}
Then the conclusion follows from \cite[Lemma 4]{kr}. \qed 

\medskip\noindent {\it Remark.} Recall that $F\ell_3$, being a hyperplane section of $\PP^2\times\PP^2$, has degree six. 
Also, the intersection $X_{ad}(G_2)\cap G(2,V_6)$ for $V_6^\perp$ isotropic has been described in \cite[Lemma 4]{kr} as a specific hyperplane section of the image of $P=\PP(\cO_{\PP^2}(2)\oplus T_{\PP^2}(-1))$ in $\PP^8$ by the linear system $|\cO_P(1)|$. This $P$ has also degree $6$ and is a union of projective planes parametrized by $\PP^2$.

\subsection{Action of $G_2$ on planes in $V_7$}
The action of $GL_2\times G_2$ on $\CC^2\otimes V_7$ is prehomogeneous
\cite[Proposition 43]{sk}; this implies that the action of $G_2$ on the Grassmannian  $G(2,V_7)$
has an open orbit. In this section we show that there are in fact finitely many orbits,
that we completely classify. Of course, one obvious invariant for a line in $\PP(V_7)$ is its position with respect to the invariant quadric $\QQ^5$. So we split the classification into the following two statements.

\begin{prop}\label{bisec}[Bisecant lines] Consider two points $x\ne y$ on the quadric $\QQ^5\subset\PP(V_7)$. 
Up to the action of $G_2$, there are only three possibilities.
\begin{enumerate}
    \item The line $\overline{xy}$ is a special line contained in the quadric, which means that $x$ and $y$ span a null plane.
    \item The line $\overline{xy}$ is a non-special line contained in the quadric, which means that $x$ and $y$
    span an isotropic plane which is not a null plane. 
    \item The line $\overline{xy}$ is not contained in the quadric, which means that $x$ and $y$ span a non-degenerate plane. 
\end{enumerate}
\end{prop}

\proof  It follows from \cite[Proposition 43]{sk} that the complement of the open orbit of 
$G_2$ in $G(2,V_7)$ must be irreducible. Since it certainly contains the hypersurface
of lines that are tangent to $\QQ^5$, there must be equality, which means that $G_2$ acts 
transitively on the lines that are true bisecants to the quadric.
The remaining claims follow from the general fact that there are at most two $G$-orbits of lines in
any generalized Grassmannian $G/P$, where $P$ is a maximal parabolic subgroup of a simple complex 
Lie group $G$ \cite[Theorem 4.3]{lm}.\qed

\begin{prop}[Tangent lines]
If $x\in \QQ^5$ and $\overline{xy}$ is 
a tangent line to $\QQ^5$, not contained in the quadric, then up to the action of $G_2$ there are only two possibilities. 
\begin{enumerate}
\item The linear form $\omega(x,y,\bullet)$ is a non zero multiple of $Q(x,\bullet)$.
\item These two linear forms are not proportional. 
\end{enumerate}
\end{prop}

For simplicity, we slightly abused notations in (1) by using the same letter $x$ to denote any generator of the line (idem for $y$).
Continuing with the same abuse, we note that the
first case is easier to express in the language of octonions: it means that the plane 
$\langle x,y\rangle$ contains a unique isotropic line (generated by $x$) such that the octonionic
product $xy$ is a (non-zero) multiple of $x$. We call such a plane a {\it semi-null} plane. 
Alternatively, we may observe that $\langle x, y \rangle$
is closed under the bracket on $V_7$ 
defined by $\omega$; this is the point of view chosen in \cite[Appendix A]{guseva}, see in 
particular Lemma A.5.

In the second case $xy$ is still non-zero, but
does not belong to $\langle x,y\rangle$.

\begin{proof} We may suppose that $x$ is the line generated by 
$e_\alpha$, in which case the stabilizer $P$ of $x$ in $G_2$ 
has for Lie algebra 
$$\fp = \langle \ft,  X_{\beta-\gamma}, X_{\gamma-\beta},  X_{-\beta}, X_{-\gamma}, X_\alpha,  X_{\alpha-\beta}, X_{\alpha-\gamma} \rangle \subset\fg_2,$$
where $\ft$ is our Cartan subalgebra and, as usual, for any root $\tau$  we denote by $X_\tau$ a generator
of the root space $\fg_\tau\subset\fg_2$.
The first three terms generate a copy of $\fgl_2$ which is the Levi part of this parabolic subalgebra.
The remaining terms generate the nilpotent part. 

The Zariski  tangent space to 
the quadric at $x$ identifies with  $e_\alpha^\perp/\CC e_\alpha$, in which 
there is a maximal filtration 
preserved by $\fp$:
$$\langle e_{-\beta}, e_{-\gamma} \rangle \subset \langle e_{-\beta}, e_{-\gamma}, e_0
\rangle \subset \langle e_{-\beta}, e_{-\gamma}, e_0,  e_\beta, e_\gamma \rangle.$$
Represent an element in $\PP(e_\alpha^\perp/\CC e_\alpha)$ by a vector of the form $t=v+ze_0+u$, with $u\in \langle  e_\beta, e_\gamma \rangle$ and $v\in \langle  e_{-\beta}, e_{-\gamma} \rangle$.
We claim that up to the action of $P$ there are only three orbits defined by the conditions that $u\ne 0$, or $u=0, z\ne 0$, 
or $u=0, z=0, v\ne 0$. 

If $u\ne 0$, using the action of the Levi part of $P$ we can suppose that  $u=e_\beta$. Then it is easy to check that $[t]$ belongs to the $P$-orbit of $[e_{\beta}]$, by simply using the action of the nilpotent subgroup $N$ generated by  $X_{-\beta}, X_{-\gamma}$, and of the maximal torus. We are then in case $(2)$ of the Proposition.

If $u=0$ and $z\ne 0$, the action of $N$ is enough to check that $[t]$ belongs to the $P$-orbit of $[e_0]$.  We are then in case $(1)$ of the Proposition.

Finally, if $u=0$ and $z=0$, $t$ spans a special line contained in the quadric, a situation which is excluded by  our hypothesis. 
\end{proof}

This finally yields a complete classification of the $G_2$-orbits in $G(2,V_7)$. Recall that in terms of the equivariant map $\wedge^2V_7\ra V_7^\vee\ra V_7$ deduced from the invariant three-form and quadratic form, a {\it null-plane} is characterized by the fact that a Pl\"ucker representative maps to zero, while for a {\it semi-null plane} it maps to a nonzero vector in the plane. Also, a  plane is {\it isotropic, rank one} or {\it  non-degenerate} when  the rank of the restriction of the invariant quadratic $Q$ is $0, 1, $ or $2$, respectively. 

\begin{coro}\label{planes}
Up to the action of $G_2$, a plane $V_2\subset V_7$ can be:
\begin{enumerate} 
\item a null-plane,
\item an isotropic plane which is not a null-plane,
\item a semi-null plane,
\item a rank one plane which is not semi-null,
\item a non-degenerate plane. 
\end{enumerate}
\end{coro}

Explicit representatives are, respectively:
\begin{equation}\label{representatives}
\langle e_\alpha, e_{-\beta}\rangle, \quad \langle e_\alpha, 
e_\beta\rangle, \quad \langle e_0, e_\alpha\rangle, \quad
\langle e_0, e_\alpha+e_{\beta}\rangle,\quad \langle e_\alpha, e_{-\alpha}\rangle.
\end{equation}

Each case defines a unique orbit of the $G_2$-action on $G(2,V_7)$, and the incidence diagram is

\vspace*{-4mm}
\begin{equation}\label{diagram-orbitsofplanes}
\xymatrix{& &  \mathcal{O}^7_{semi}\ar[dr] & &\\  \mathcal{O}_{gen}^{10}\ar[r] &  \mathcal{O}^9_{rank1}\ar[ur]\ar[dr] & & 
\mathcal{O}^5_{null} &\hspace{-1cm} =X_{ad}(G_2)\\ & &  \mathcal{O}^7_{iso}\ar[ur] & &}
\end{equation}

Here the superscripts are the dimensions of the orbits and an arrow from $\cO$ to $\cO'$ means that $\cO'$ is contained in the Zariski closure of $\cO$. We will see in Corollary \ref{iota}
that
$\mathcal{O}_{semi}$ and  $\mathcal{O}_{iso}$ are projectively equivalent.
By definition, the closure of $\mathcal{O}^7_{iso}$ is $OG(2,V_7)$. Following \cite[Appendix A]{guseva}, we denote by $LieGr(2,V_7)$ the closure of $\mathcal{O}^7_{semi}$.

\subsection{Decomposing five-planes}
When we restrict the invariant three-form to a codimension two subspace $V_5\subset V_7$, since a three-form on $V_5$ is the same as a two-form on $V_5^\vee$, there are three possibilities a priori: we could get zero, or a form of rank two, or a form of 
rank four. The first case is actually impossible: we know that the most degenerate situation is when $V_5$ is orthogonal to a null-plane, say $\langle e_\alpha,e_{-\beta}\rangle$, and the restriction of $\omega$ to 
$\langle e_\alpha,e_{-\beta}\rangle^\perp=\langle e_\alpha, e_{-\beta}, e_0, e_\gamma, e_{-\gamma}\rangle$ is $v_0\wedge v_\gamma\wedge v_{-\gamma}\ne 0$.

\medskip\noindent {\it Definition}. A subspace  $V_5\subset V_7$ is {\it decomposing} if $\omega_{|V_5}$ is decomposable, meaning that
there exist linear forms $v_1, v_2, v_3$ such that  $\omega_{|V_5}=v_1\wedge v_2\wedge v_3$. 
The kernel of $\omega_{|V_5}$ is a plane $A_2\subset V_5$, that we call the 
{\it axis} of the decomposing plane. 

\begin{prop}\label{decomp=seminull} 
A five-plane $V_5$ is decomposing if and only if its $Q$-orthogonal $V_2=V_5^\perp$ is a null 
or a semi-null plane in $V_7$. Its axis $A_2$ is an isotropic plane, which coincides with $V_2$ if and only 
if it is a null-plane.
\end{prop} 

\proof 
Since a skew-symmetric form has rank at most two 
if and only if its square is zero, the locus of decomposing five-planes is the zero-locus,
in $G(5,V_7)$, of the global section $t_\omega$ of $\cU_5^\vee (1)$ defined by $\omega$ 
(here $\cU_5$ denotes the rank five tautological bundle on $G(5,V_7)$). If we identify $G(5,V_7)$
with  $G(2,V_7)$ by orthogonality with respect to the invariant quadratic form, the bundle 
$\cU_5^\vee$ is identified with the quotient bundle $\cQ_5$ on $G(2,V_7)$, and   $t_\omega$
becomes a section of $\cQ_5(1)=\wedge^2\cU^\vee_2\otimes \cQ_5$. By the Borel-Weil theorem, $$H^0(G(2,V_7),\cQ_5(1))= S_{21111}V_7=Ker(V_7\otimes \wedge^5V_7\lra\wedge^6V_7),$$ 
with the usual notation for Schur functors. 
As a $G_2$-module, $\wedge^5V_7=\wedge^2V_7=\fg_2\oplus V_7$, so $V_7\otimes \wedge^5V_7$ has a unique invariant line, while $\wedge^6V_7=V_7$ has none. So $H^0(G(2,V_7),\cQ_5(1))$ 
contains a unique invariant line; 
this implies that $t_\omega$ coincides, up to scalar,
with the invariant section $t'_\omega$ of  $\cQ_5(1)$ obtained
as the composition 
$$\wedge^2\cU_2\hookrightarrow \wedge^2V_7\otimes\cO_{G(2,V_7)}\stackrel{\omega}{\lra}
V_7\otimes\cO_{G(2,V_7)}\lra \cQ_5,$$
(where we use once again the isomorphism $V_7^\vee\simeq V_7$ defined by the invariant quadratic form). 
By \cite[Lemma A.5]{guseva}, the zero-locus of $t'_\omega$ is exactly $LieGr(2,V_7)$. 
The second part of the statement can be checked explicitly, using the explicit representatives of orbits given after Corollary \ref{planes}. \qed

\medskip
From the axis $A_2$, it is actually easy to reconstruct $V_5$. Indeed, the correspondence between $A_2$ and $V_2$
induces an isomorphism between $LieGr(2,V_7)$ and $OG(2,V_7)$, as noticed in \cite[Proposition A.7]{guseva}. 
This isomorphism is actually linear and can be described as follows. Recall the decomposition into irreducible components
\begin{equation}\label{deco wedgeV7}
\wedge^2V_7=\fg_2\oplus V_7,    
\end{equation}
where $V_7\simeq V_7^\vee$ embeds into  $\wedge^2V_7$ by contraction with the (dual) invariant three-form, 
and $\wedge^2V_7$ maps to $V_7$ by a similar contraction, whose kernel is $\fg_2$. Denote by $\iota$ the 
involution of $\wedge^2V_7$ acting by $+1$ on $ \fg_2$ and by $-1$ on $V_7$. 
It obviously fixes (the cone over) $\mathcal{O}^5_{null}=X_{ad}(G_2)$, but a priori not (the cone over) $G(2,V_7)$.

\begin{prop}\label{iota}
$\mathcal{O}^7_{iso}$ and $\mathcal{O}^7_{semi}$ are projectively equivalent via $\iota$. Taking closures, we get
\[LieGr(2,V_7)=\iota(OG(2,V_7)).\]
\end{prop}

\proof
Starting from the 
Pl\"ucker representative $e_\alpha\wedge e_\beta$ of the general isotropic plane $A_2$ as above, we first 
compute its image in $V_7$ to be $\omega(e_\alpha, e_\beta)=e_{-\gamma}$. Then we identify 
this vector with the 
two-form $e_{-\gamma}.\omega=e_0\wedge e_{-\gamma}-e_\alpha\wedge e_\beta.$ As a consequence, if we apply the 
symmetry $\iota$ of $\wedge^2V_7$ defined by its decomposition into $\fg_2$-modules, we see that 
$$\iota(e_\alpha\wedge e_\beta)=e_0\wedge e_{-\gamma}.$$
This is nothing else than the Pl\"ucker representative of $V_5^\perp$, where $V_5$ is the unique decomposing five-plane
containing $A_2$, that  we computed above.\qed 

\medskip
Consider again \eqref{diagram-orbitsofplanes}. Taking closures, we get the diagram

\begin{equation*}\label{orbitsofplanes}
\xymatrix{& & LieGr(2,V_7)\ar@{_{(}->}[ld]\ar@{.>}[dd]^\iota & \\  G(2,V_7) & \; 
\mathcal{C}_3\ar@{_{(}->}[l] & & 
X_{ad}(G_2)\ar@{_{(}->}[ld]\ar@{_{(}->}[lu] \\ & &  OG(2,V_7)\ar@{.>}[uu]\ar@{_{(}->}[lu]& }
\end{equation*}
where $\mathcal{C}_3$ is a cubic hypersurface (indeed the restriction of the invariant quadratic
form to planes yields a section of $S^2\cU_2^\vee$, and the condition that this restriction is degenerate
gives a section of $\det(S^2\cU_2^\vee)=\cO_{G(2,V_7)}(3)$).
The dashed vertical arrow indicates the symmetry induced by $\iota$.

\medskip

Now we turn to the property that makes decomposing five-planes relevant for our problems. 

\begin{prop}\label{constant rank}
Let $V_5$ be a decomposing five-plane.
\begin{enumerate}
\item The kernel of the map $\omega : \wedge^2V_5\rightarrow V_7^\vee$ is a five-plane $K_5$. 
\item The intersection $X_{ad}(G_2)\cap \PP(K_5)=G(2,V_5)\cap\PP(K_5)$ is a (possibly degenerate) cubic scroll.
\end{enumerate}
\end{prop}

\begin{proof} Consider the diagram
$$\xymatrix{ & & \wedge^2V_5 \ar[d]\ar[dr]^{\omega_{|V_5}} & & \\
0\ar[r]& V_5^\perp\ar[r] & V_7^\vee\ar[r] & V_5^\vee \ar[r]& 0}$$
The kernel of the diagonal arrow is $A_2\wedge V_5$, where $A_2\subset V_5$ denotes the axis; its dimension is seven. The kernel $K_5$  of the vertical arrow is equal to the kernel of the induced map $A_2\wedge V_5 =Ker(\omega_{|V_5})\lra V_5^\perp$, in other words, we have a commutative diagram

\vspace{0.2cm}

\begin{equation}\label{diagK5}
\begin{CD}
  0  @>>>K_5    @>>> \wedge^2 V_5  @>>> V_7^\vee    \\
    @. @|      @AAA   @AAA \\
  0  @>>> K_5  @>>> A_2\wedge V_5  @>>> V_5^\perp @>>> 0   \\
\end{CD}
\end{equation}

\vspace{0.2cm}

\noindent and $K_5$ has dimension at least five. If this dimension was bigger, the intersection $X_{ad}(G_2)\cap G(2,V_5)$ would contain a hyperplane section of $\PP(A_2\wedge V_5)\cap G(2,V_5)$. 
The latter intersection is the set of planes in $V_5$ that meet $A_2$ non trivially; geometrically, this is a cone over a cubic scroll isomorphic to $\PP^1\times\PP^2$. 
But a hyperplane section of such a cone always contains a plane, and we would conclude that  $X_{ad}(G_2)$ contains a plane, which is not the case \cite[Lemma 3]{kr}. This proves (1), and then 
$X_{ad}(G_2)\cap \PP(K_5)=G(2,V_5)\cap\PP(K_5)$ is a codimension 
two linear section of a cone over $\PP^1\times\PP^2$, hence 
a cubic scroll by \cite[Lemma 5]{kr}. More precisely, one easily checks that the scroll is smooth when the section does not contain the vertex, which means that $A_2$ is not a null-plane;  otherwise it is a cone over a twisted cubic. This proves (2). 
\end{proof}

\textit{Remark.} 
Consider a decomposing five-plane $V_5$, so that $\omega_{|V_5}=e_1\wedge e_2\wedge e_3$. 
If $V_5=\langle e_6, e_7\rangle^\perp, $ there exists two-forms $\theta_6$ and $\theta_7$ such that 
$$\omega =e_1\wedge e_2\wedge e_3+e_6\wedge \theta_6+e_7\wedge \theta_7,$$
and the axis is $A_2=\langle e_1, e_2, e_3, e_6, e_7\rangle^\perp$.
For $\psi\in \wedge^2V_5$, the contraction with $\omega$ yields 
$$\iota_\psi(\omega)= \iota_\psi(e_1\wedge e_2\wedge e_3)+\iota_\psi(\theta_6) e_6+\iota_\psi(\theta_7) e_7,$$
showing that the image of $\omega : \wedge^2V_5\rightarrow V_7^\vee$ is contained in $A_2^\perp$, hence it is equal to it.

\smallskip The fact that an intersection $X_{ad}(G_2)\cap G(2,V_5)$ is always a conic or a cubic scroll, possibly singular, was already observed in \cite[Lemma 5]{kr}. We can be more precise and compute 
this intersection for the five types of five-planes orthogonal to the
five types of planes 
given in Corollary \ref{planes}; using the explicit representatives of (\ref{representatives}), this is a straightforward computation whose result is presented in the table below.
\smallskip

\begin{table}[ht]\label{table} 
\caption{}
\begin{tabular}{|l|l|l|}
\hline
$V_2=V_5^\perp$ & Representative & $X_{ad}(G_2)\cap G(2,V_5)$\\
\hline
\textit{non-degenerate} & $\langle e_{\alpha}, e_{-\alpha}\rangle$ & smooth conic\\
\textit{rank one} & $\langle e_0, e_{\alpha}+e_\beta\rangle$  & reduced singular conic\\
\textit{isotropic} & $\langle e_{\alpha}, e_{\beta}\rangle$ & double line \\
\textit{semi-null} & $\langle e_{0}, e_{\alpha}\rangle$ & smooth cubic scroll \\
\textit{null} & $\langle e_{\alpha}, e_{-\beta}\rangle$ & cone over a rational cubic \\
\hline
\end{tabular}
\end{table}

\medskip
Let us briefly treat the isotropic and semi-null cases; the other ones are obtained with similar explicit computations and will be left to the reader. \smallskip

If $V_2=\langle e_\alpha, e_\beta\rangle$, then $V_5=V_2^\perp=
\langle e_\alpha, e_\beta, e_0, e_\gamma, e_{-\gamma}\rangle$. We deduce that 
$$\wedge^2V_5\cap \fg_2 =\langle e_\alpha\wedge e_{-\gamma}, \;
e_\beta\wedge e_{-\gamma}, \; e_\alpha\wedge e_\beta-e_0\wedge e_{-\gamma}
\rangle .$$ 
For a form $\theta =x e_\alpha\wedge e_{-\gamma} + ye_\beta\wedge e_{-\gamma}
+z (e_\alpha\wedge e_\beta-e_0\wedge e_{-\gamma})$ in this three-plane
we have $\theta\wedge\theta =-2z^2e_\alpha\wedge e_\beta\wedge e_0\wedge 
e_{-\gamma}$, hence the double line. \smallskip

If $V_2=\langle e_{0}, e_{\alpha}\rangle$ then  $V_5=V_2^\perp=
\langle e_{\alpha}, e_\beta, e_{-\beta}, e_\gamma, e_{-\gamma}\rangle$ and 
$$\wedge^2V_5\cap \fg_2 =\langle e_\alpha\wedge e_{-\beta}, \;
e_\alpha\wedge e_{-\gamma}, \; e_\beta\wedge e_{-\gamma}, \;
e_\gamma\wedge e_{-\beta}, \; e_\beta\wedge e_ {-\beta}-e_\gamma\wedge e_{-\gamma}
\rangle .$$
For $\theta =pe_\alpha\wedge e_{-\beta}+q
e_\alpha\wedge e_{-\gamma}+re_\beta\wedge e_{-\gamma}+s 
e_{-\beta}\wedge e_\gamma+t(e_\beta\wedge e_ {-\beta}-e_\gamma\wedge e_{-\gamma})$,
we compute that $\frac{1}{2}\theta\wedge\theta$ is equal to
$$(pr-qt)e_\alpha\wedge e_{-\beta}\wedge 
e_\beta\wedge e_{-\gamma}+(qs-pt)e_\alpha\wedge e_{-\gamma}\wedge e_{-\beta}\wedge e_\gamma +(rs-t^2)e_\beta\wedge e_{-\gamma}\wedge e_{-\beta}\wedge e_\gamma.$$
So $\theta\wedge\theta=0$ if and only if the rank of the matrix  $\begin{pmatrix} p&s&t\\q&t&r
\end{pmatrix}$ is at most one, hence the smooth cubic scroll.

\subsection{Some vector bundles}\label{svb}
By Proposition \ref{iota}, $LieGr(2,V_7)$ has two  $G_2$-equivariant embeddings
in $G(2,V_7)$, the natural one and its twist by $\iota$. 

\smallskip\noindent {\it Definition}. To keep the exposition readable, we use the same letter for the tautological bundle $\cU_2$ of $G(2,V_7)$ and its restriction to $LieGr(2,V_7)$.
We denote by $\cV_5=\cU_2^\perp$ 
the bundle of decomposing five-planes (orthogonality being taken with respect to the invariant quadratic form $Q$). The rank two subbundle $\cA_2\subset \cV_5$ defined by their axis (recall Proposition \ref{decomp=seminull}) is the tautological rank two bundle on $LieG(2,V_7)$ corresponding to the twisted embedding in $G(2,V_7)$; in the terminology of \cite[Remark A.8]{guseva}, $\cA_2$ is the dual spinor bundle.

\smallskip
By definition of decomposing five-planes and their axis,  
$\omega$ descends to a global section of $\wedge^3(\cV_5/\cA_2)^\vee$, everywhere nonzero. 
Also recall that  the morphism $\wedge^2\cV_5\ra V_7^\vee\otimes\cO_{LieGr(2,V_7)}$
has constant rank by Proposition \ref{constant rank}.

\medskip\noindent {\it Definition}. We denote by  $\mathcal{K}_5$
the kernel of this morphism, a rank five vector  bundle on 
$LieGr(2,V_7)$. 

\smallskip 
This bundle  admits a natural inclusion
\begin{equation}\label{K5}
\cK_5\hookrightarrow \fg_2\otimes\cO_{LieGr(2,V_7)}.
\end{equation}
All these bundles are  $G_2$-equivariant and, taking into account the remark after Proposition \ref{constant rank}, the relative version of diagram (\ref{diagK5}) can be completed into the following 
commutative diagram of vector bundles:

\small
\begin{equation}\label{vb-diagram}
\begin{CD}
  @.    @. 0   @. 0 @. \\
  @.   @. @AAA        @AAA   \\
  @.  @.\wedge^2(\mathcal{V}_5/\cA_2)   @=  (\mathcal{V}_5/\cA_2)^\vee   @. \\
   @.  @. @AAA        @AAA   \\
  0  @>>>\mathcal{K}_5    @>>> \wedge^2\mathcal{V}_5  @>>> \mathcal{A}_2^\perp   @>>> 0 \\
    @. @|      @AAA   @AAA \\
  0  @>>> \mathcal{K}_5  @>>> \cA_2\wedge\mathcal{V}_5  @>>> \cU_2   @>>> 0 \\
   @.    @. @AAA        @AAA  \\
    @.    @. 0   @. 0   @. 
\end{CD}
\end{equation}
\normalsize
\medskip 

Since $\wedge^3(\cV_5/\cA_2)$ is trivial, $\cV_5$ and 
$\cA_2$ have the same determinant, and their orthogonals as well. We deduce that
\begin{equation}\label{det}
\begin{array}{rcl}
\det(\cU_2)= \det(\mathcal{A}_2)= \det(\mathcal{V}_5) &= & \cO_{LieGr(2,V_7)}(-1), \\ 
\det(\mathcal{K}_5) &=& \cO_{LieGr(2,V_7)}(-3).
\end{array}
\end{equation}

\medskip Recall from the proof of Proposition \ref{constant rank}
that the cubic scroll associated to a decomposing five-plane $V_5$
is a codimension two linear section, by $\PP(K_5)$, of the Schubert cycle of planes in $V_5$ that meet its axis $A_2$. This Schubert 
cycle is contained in $\PP(A_2\wedge V_5)$, and can be identified with a cone over $\PP(A_2)\times\PP(V_5/A_2)$ with vertex 
$\PP(\wedge^2A_2)$. The linear section is a cone over a rational cubic when it contains the vertex (meaning that $A_2$ is a null plane), otherwise it is a smooth cubic scroll. 

For future use (more precisely, for section \ref{section bpf}), let us denote by $\cR$ and $\cC$ the corresponding families of cubic scrolls  and Schubert cycles. They fit in the following  diagram:

\begin{equation}\label{RC}
\xymatrix{& \PP_{LieGr(2,V_7)}(\cA_2\wedge\cV_5)\ar[rr] && \PP(\wedge^2V_7) \\
\cC\;\ar@{^{(}->}[ru]\ar[rr]  && G(2,V_7)\ar@{^{(}->}[ru]  & \\ 
&  \PP_{LieGr(2,V_7)}(\cK_5)\ar@{^{(}->}[uu]\ar[rr] && \PP(\fg_2)\ar@{^{(}->}[uu] \\
\cR\;\ar@{^{(}->}[uu]\ar@{^{(}->}[ru]
\ar[rr]
&& X_{ad}(G_2)\ar@{^{(}->}[uu]\ar@{^{(}->}[ru] & 
}
\end{equation}

\smallskip
The whole lefthand part of this diagram is fibered over $LieGr(2,V_7)$. 

Moreover, $\cR$ is defined in $\cC$ as the intersection with $\PP_{LieGr(2,V_7)}(\cK_5)$. This will be used in the proof of  Theorem \ref{basepointfree}.

\subsection{Cubic scrolls in hyperplane sections of $X_{ad}(G_2)$}\label{sect cubicscrolls}

Let us come back to diagram (\ref{fundamental}). 
We denote by $\Upsilon(X_{ad}(G_2))$ the Hilbert scheme of cubic scrolls and cones in $X_{ad}(G_2)$.

\begin{lemma}\label{scrolls}
For any line $L\subset\QQ^5$, the scheme $R_L:=p_X(p_\QQ^{-1}L)$ is a cubic scroll or cone in $X_{ad}(G_2)$. Moreover the map 
$$OG(2,V_7)\ra \mathrm{Hilb}(X_{ad}(G_2)), \qquad L\mapsto R_L,$$
is an isomorphism onto $\Upsilon(X_{ad}(G_2))$. 
\end{lemma}

\proof
The correspondence  (\ref{fundamental}) allows to identify $\QQ^5$ with the Hilbert scheme 
$F_1(X_{ad}(G_2))$ of lines in $X_{ad}(G_2)$.
Recall from Proposition \ref{bisec} that there are two types of lines in $\QQ^5$. If $L$ is special, it defines a point $x$ in $X_{ad}(G_2)$, and $R_L$ is the union of lines in $X_{ad}(G_2)$ passing through that point - hence a cubic cone. If $L$ is not special, then $R_L$ is still a surface of degree three.
if $R$ is a cubic scroll or cone, the pencil of lines in the ruling of $R$ defines a curve $C\in \QQ^5$, which for degree reasons must be a line. \qed

\medskip 

Let $\mathcal{X}\subset X_{ad}(G_2)\times \PP(\fg_2^\vee)$ be the universal hyperplane section of $X_{ad}(G_2))$, and  $\Upsilon(\cX/ \PP(\fg_2^\vee))$ 
be the relative Hilbert scheme of cubic scrolls and cones. There is a diagram
\begin{equation}\label{relative scrolls}
\xymatrix{
 & \Upsilon(\cX/ \PP(\fg_2^\vee))\ar@{->}[ld]_{p_\Upsilon}\ar@{->}[rd]^{p_\fg} & \\
 \Upsilon(X_{ad}(G_2)) & & \PP(\fg_2^\vee),
}
\end{equation}
and  $\Upsilon(X_{ad}(G_2)\cap H)=p_\fg^{-1}([H])$ for any hyperplane $H\subset\fg_2$. 
The following information can be extracted from \cite{kuznetsov-hpd}. 
\begin{enumerate}
\item  $p_\Upsilon$ is a $\PP^8$-bundle by \cite[Lemma 8.5]{kuznetsov-hpd} (the scheme $\Sigma'$ defined there is the universal cubic scroll or cone by Lemma \ref{scrolls}).
\item  $\Upsilon(\cX/ \PP(\fg_2^\vee))\simeq \tilde{Y}$  in the notation of \cite{kuznetsov-hpd}, and the map $p_\fg$ coincides with $\tilde{g}$ therein.
\item  The Stein factorization of this map takes the form $p_\fg=\tilde{g}= \bar{g}_3\circ \phi$ by \cite[Lemma 8.8]{kuznetsov-hpd}.
\item
The dual of the decomposition
\eqref{deco wedgeV7} yields a projection $\PP(\wedge^2 V_7^\vee)\dashrightarrow \PP(\fg_2^\vee)$. The image of $G(2,V_7^\vee)$ via the projection 
is $Z_1$ in Kuznetsov's notation, see \cite[Lemma 8.2 and Corollary 8.11]{kuznetsov-hpd}.
Over the complement of $Z_1$, the map $\phi$ 
is an étale-locally trivial $\PP^2$-bundle by \cite[Proposition 8.9]{kuznetsov-hpd}, and the map $\bar{g}_3: \tilde{Y}_3\lra \PP(\fg_2^\vee)$ is a
double cover branched along the dual variety of $X_{ad}(G_2)$, i.e. the discriminant sextic hypersurface $\Delta\subset \PP(\fg_2^\vee)$.
\end{enumerate}

Since $Z_1$ is irreducible, a union of $G_2$-orbits and 10-dimensional, it has to be the closure of $O_{10}$ in \cite[Lemma 3.1]{kapustka-mukai}.
The intersection $X_{ad}(G_2)\cap H$ is smooth for $[H]\notin \Delta$,  and nodal with a single node for $[H]\in \Delta\smallsetminus Z_1$ (\cite[Proof of Theorem 2.3]{kapustka-mukai}), while $Z_1$ parametrizes highly singular hyperplane sections.

Putting everything together, we deduce the following statement:

\begin{prop}\label{two veronese}
The Hilbert scheme $\Upsilon(X_{ad}(G_2)\cap H)$  is isomorphic to
\begin{enumerate}
    \item $\PP^2\sqcup\PP^2$ if $[H]\notin\Delta$, i.e. $X_{ad}(G_2)\cap H$ is smooth;
    \item $\PP^2$ if $[H]\in\Delta\smallsetminus Z_1$, i.e. $X_{ad}(G_2)\cap H$ is nodal.
    \end{enumerate}
    \end{prop}

The first part of the statement can also almost be deduced directly from Lemma \ref{scrolls}, as follows. An equation of $H$ defines a section of the dual Cayley bundle $\cC^\vee$, and the zero locus $Z$ of this section is the locus of points in $\QQ^5$ defining lines in $X_{ad}(G_2)$ that are contained in $H$. So cubic scrolls and cones contained in $X_{ad}(G_2)\cap H$ are in correspondence with lines in $Z$. But by \cite[Theorem 3.7]{ottaviani-cayley}, when $Z$ is smooth it is isomorphic to the flag manifold $F\ell_3$, hence lines in $Z$ are parametrized by 
two copies of $\PP^2$. \smallskip

Cubic surface scrolls in smooth hyperplane sections $F=X_{ad}(G_2)\cap H$ were also studied in \cite{kr} by reducing to surface scrolls in $G(3,U_6)$. Indeed, fix a vector space $U_6$ of dimension six. By \cite[Theorem 1]{kr}, there exists a unique linear embedding $\psi:H\hookrightarrow \PP(\bigwedge^3 U_6)$ sending $F$ isomorphically to a four dimensional linear section of $G(3,U_6)$.
Now, inside $G(3,U_6)$ there are two different types of cubic scrolls, yielding two families in $F$ both parametrized by $\PP^2$. 

When $F$ is nodal with a single node $f$, the projection $\psi$ from the node sends $F$ birationally to 
a codimension two linear section $\tilde{F}=LG(3,U_6)\cap P$ of 
the Lagrangian Grassmannian $LG(3,U_6)$, see \cite{kapustka-mukai}. The cubic scrolls and cones in $F$ are sent to maximal quadrics in $\tilde{F}$, and one can check that such quadrics are parametrized by a projective plane.

\medskip \noindent {\it Remark.}
Not surprisingly, the cubic scrolls in $X_{ad}(G_2)$ defined by decomposing five-planes in $V_7$ also yield twisted cubic curves in Fano fourfolds and threefolds of genus ten. As observed by an anonymous referee,  Proposition \ref{two veronese} is closely related to Theorem 9.11 of \cite{kuzprokh}, which asserts that the Hilbert scheme of rational cubic curves inside a smooth prime Fano threefold of genus ten is a $\PP^2$-bundle over a genus two curve.

 \section{The involution}
In all the sequel we consider a general $K3$ surface $S$ of genus $10$. (Beware that $S$ will be very general starting from section \ref{sec J}.) By the work of Mukai \cite{mukai}, $S$ can be  described as a generic codimension three linear section of the adjoint variety of $G_2$, 
\begin{equation}\label{Mukai model}
S=X_{ad}(G_2)\cap L \subset \PP(\fg_2),
\end{equation}
where $L=\PP(V_{11})$ for $V_{11}\subset\fg_2$ a codimension three subspace.
\smallskip

We will first present our geometric construction of the birational involution $\varphi$ of $S^{[3]}$ whose existence is known abstractly from \cite{beri}. This will immediately stress the rôle of the variety $J$ parametrizing schemes in $S^{[3]}$ that do not generate a hyperplane. These subschemes of $S$ are closely related to the cubic scrolls we studied in the previous section. In order to get a better understanding of the indeterminacy locus $I$ of $\varphi$, we will then need to dig a little deeper in Bayer-Macri's description of the Mori cone and of wall-crossing for $S^{[3]}$. We will deduce a description of $I$ in terms of vector bundles, implying that $I=J$ and that it admits a structure of $\PP^2$-fibration. This is exactly what allows to define a Mukai flop, and we will finally prove that this Mukai flop is nothing else that $\varphi$. 

\medskip\noindent {\it Hypothesis.} In the next section \ref{section invol}, $S$ will be any smooth polarized K3 surface of genus ten admitting a Mukai model (\ref{Mukai model}). According to Mukai, this is equivalent to asking that the polarized K3 surface is Brill-Noether general 
\cite[Theorem 3.10 and Theorem 4.7]{mukai-bn} (but see also \cite[Section 1.2-1.3]{bkm2}). 
Polarized K3 surfaces which are Brill-Noether general fill a Zariski open subset of  the 19-dimensional  moduli space of K3 surfaces of genus ten, whose complement has been explicitly computed \cite[Lemma 2.8]{greer}.

Starting from section \ref{sec J} we will 
always suppose that $S$ is very general, in the sense that $Pic(S)$ is generated by the hyperplane divisor (the restriction of the hyperplane divisor from $\PP(\fg_2)$).  We will  use more than once that this implies that $S$ contains no line and no conic (more generally, no curve of degree smaller than $18$). 
For statements like Lemma \ref{cork psi}, we will also need $L$ to be generic enough for certain degeneracy schemes to be well-behaved. Some specific genericity assumptions are also required in order to be able to apply the results of \cite{kr}. In the end we will not attempt to make our genericity conditions precise.

\subsection{Construction of the involution}\label{section invol}
 Let $S$ be a smooth K3 surface obtained as a linear section (\ref{Mukai model}) of $X_{ad}(G_2)$.

\begin{lemma}\label{general}
Let $p_1,p_2,p_3$ be generic points on $S$. They correspond to three null planes $P_1,P_2,P_3$ spanning a hyperplane of $V_7$.
\end{lemma}

\proof By contradiction, suppose that  $P_1,P_2,P_3$ are contained in  a five-plane $V_5$. Since $S$ is a linear section of  $X_{ad}(G_2)$, Table 1 shows that $V_5$ must be decomposing, since otherwise we would conclude that there is a conic or a line in $S$  passing through the generic point, which is not the case. 

Now, fix $P_1$ and $P_2$, spanning a four-plane $V_4$. Recall that by the generalized Bruhat decomposition (see e.g. \cite[Proposition 3.16]{borel-tits}), the relative positions of two points in a rational homogeneous space are in bijection with double cosets of the Weyl group. Applying this to $X_{ad}(G_2)$, we conclude that, up to the action of $G_2$, there are only four possible relative positions for two points $p_1$ and $p_2$ of the adjoint variety. These relative positions  are easy to identify: either $p_1$ and $p_2$ coincide, or they are joined by a line in $X_{ad}(G_2)$, or they are orthogonal with respect to the Killing form, or they are in general position.

A dimension count shows that in the latter case, there is only a one-dimensional family of decomposing five-planes $V_5$ containing $V_4$.
If $V_5$ is constant when we move the third point $P_3$, the surface $S$ must be contained in $G(2,V_5)$, which is clearly absurd by Table 1 again. 
If $V_5$ moves in a one-dimensional  family, then $S$ is covered by its intersections with the corresponding $G(2,V_5)$'s. In particular these intersections must be curves, and linear sections of the cubic scrolls or cones $X_{ad}(G_2)\cap G(2,V_5)$; so they are contained in hyperplane sections of these scrolls, hence they must be rational curves or union of rational curves. We would thus conclude again that $S$ is covered by rational curves,  which is not the case. 

We conclude that $p_1$ and $p_2$ cannot be in general position in $X_{ad}(G_2)$, which means that they are orthogonal with respect to the Killing form. Since  this must be true for any two general points in $S$, this implies that the span $L$ of $S$ is isotropic; a contradiction since the Killing form is non-degenerate, and $L$ has dimension bigger than $6$. 
\qed

\medskip 
For $p_1,p_2,p_3$ generic points on $S$, let $V_6$ denote the span of the  corresponding  null planes in $V_7$.
By Proposition \ref{compatible} and \eqref{X cap sl3 = X cap G(2,6)},
$$\begin{array}{rcccl}
S\cap G(2,V_6) & = & (X_{ad}(G_2)\cap L)\cap G(2,V_6) & = & \hspace*{3cm} \\
\hspace{3cm} &=& (X_{ad}(G_2)\cap G(2,V_6))\cap L &= & F\ell(V_6)\cap L
\end{array}$$
is a codimension three section of the flag variety, hence in general consists in six reduced points: $p_1,p_2,p_3$ 
plus three other points $q_1,q_2,q_3$. Let
$$\varphi(p_1+p_2+p_3)=q_1+q_2+q_3. $$
This gives a rational map from $S^{[3]}$ to itself, more 
formally described in the proof of the next statement.

\begin{theorem}\label{involution}
Let $S=X_{ad}(G_2)\cap L$ be a K3 surface of genus 10. Then there is a unique
birational involution 
$\varphi:S^{[3]}\dashrightarrow S^{[3]}$
such that for a subscheme $Z\subset S$ defined by a general point of $S^{[3]}$, there is a unique hyperplane $V_6\subset V_7$ such that 
\[
S\cap G(2,V_6)=Z\cup \varphi(Z). 
\]
\end{theorem}

\medskip

In particular, for $S$ very general $\varphi$ has to coincide with the non symplectic birational involution whose 
existence was predicted in \cite{beri}.

\proof
Let  
$\cQ_5$ denote the quotient bundle on the Grassmannian $G(2,V_7)$, and consider the natural morphism $\PP_S(\cQ_5^\vee)\lra \PP(V_7^\vee)$. Since the pre-image of a point defined by a hyperplane $V_6$ is the set of pairs $(P\subset V_6)$ where the plane $P$ defines a point of $S$, this morphism is generically $6$ to $1$. Therefore, the relative Hilbert scheme $Hilb^3(\PP_S(\cQ_5^\vee)/\PP(V_7^\vee))$ admits an obvious non-trivial birational involution. Finally, the projection of $\PP_S(\cQ_5^\vee)$ to $S$ induces a birational morphism from $Hilb^3(\PP_S(\cQ_5^\vee)/\PP(V_7^\vee))$ to $S^{[3]}$, whose inverse associates to a triple of general points of $S$ the hyperplane generated by the three
corresponding planes. It is clear that the induced birational involution of $S^{[3]}$ coincides with $\varphi$.
\qed

\subsection{Schemes generating a non-maximal linear space.}\label{sec J}
From now on, $S$ is a very general linear section of $X_{ad}(G_2)$.
The definition of $\varphi$ stresses the condition, for three 
points in $S$, that the corresponding planes in $V_7$ generate a hyperplane. 
In order to understand better this condition, 
consider  the universal 
scheme $\cZ$ of length three over $S$, with the two projections 
\begin{equation}\label{universal-scheme}
\xymatrix{ & \cZ\ar[ld]_p\ar[rd]^q & \\ S^{[3]} & & S.}
\end{equation}
Denote again by $\cU_2$ the restriction to $S$ of the rank two tautological bundle on $G(2,V_7)$. 
The tautological epimorphism $V_7^\vee\otimes \cO_S\lra \cU_2^\vee$ 
induces a morphism
\begin{equation}\label{morphism-phi}
\phi: V_7^\vee\otimes \cO_{S^{[3]}}\lra p_*(q^*\cU_2^\vee).
\end{equation}
Since $p$ is flat, $p_*(q^*\cU_2^\vee)$ is a vector bundle of rank six, sometimes denoted  $(\cU_2^\vee)^{[3]}$.
We say that a scheme $Z\subset G(2,V_7)$ generates the linear space $W\subset V_7$ if
$$W^\perp= Ker(V_7^\vee \lra H^0(Z,{\cU_2^\vee}_{|Z})).$$

Then of course $Z\subset G(2,W)$, since this sub-Grassmannian is cut out in $G(2,V_7)$ by the linear forms from $W^\perp\wedge V_7^\vee\subset \wedge^2V_7^\vee$. 

\medskip \noindent {\it Definition.} 
Let $J\subset S^{[3]}$ 
denote the degeneracy locus of $\phi$, with the scheme
structure defined by the corresponding Fitting ideal. This is the locus of length three subschemes of $S$ that fail to generate a hyperplane of $V_7$.

\medskip
When $Z$ is a reduced length three subscheme of $S$, outside $J$, 
it consists in three points corresponding to three planes in general position in $V_7$, whose linear span is a hyperplane. By Lemma \ref{general}, $J$ is a proper subset of $S^{[3]}$.

The locus $J\subset S^{[3]}$ is closely related to decomposing five-planes, as our next result shows.

\begin{prop}\label{Zdec}
Let $Z\subset S$ be a length three subscheme that does not generate a hyperplane. Then it generates a decomposing 
five-plane in $V_7$.  
\end{prop}

\proof First observe that $Z$ cannot generate a subspace of $V_7$ of dimension smaller than five. Indeed, for any $V_4\subset V_7$ the intersection $X_{ad}(G_2)\cap G(2,V_4)=\PP(\fg_2)\cap G(2,V_4)$ is a conic, or a line, or a finite scheme of length at most two. Since $S$ contains no conic and no line,
a fortiori $S\cap G(2,V_4)$  must be a finite scheme of length at most two, and cannot contain $Z$.

If $Z$ generates a $V_5$, it must be contained inside $G(2,V_5)\cap X_{ad}(G_2)$. But recall from Table \ref{table}
that this is a conic when $V_5$ is not decomposing,
and by the same argument as before  $S\cap G(2,V_5)$ cannot contain 
$Z$.\qed

\medskip
The previous result and Proposition \ref{decomp=seminull} imply that there is a map \[\alpha:J\lra LieGr(2,V_7)\]
associating to a subscheme $Z$ the decomposing five-plane $V_5$ it generates.
The map is injective, since we recover $Z$ from $V_5$ by intersecting with 
$L=\PP(V_{11})$ the cubic surface scroll $X_{ad}(G_2)\cap G(2,V_5)$.
Since $L\subset \PP(\fg_2)$ has codimension three, the intersection cannot be transverse: we will detect this defect 
of transversality through the morphism  
\begin{equation}\label{psi}
    \psi: \cK_5\lra (\fg_2/V_{11})\otimes \cO_{LieGr(2,V_7)}
\end{equation}
deduced from (\ref{K5}). We denote by $\cD(\psi)$ its first degeneration scheme.

\begin{lemma}\label{JisDpsi}
$\cD(\psi)=\alpha(J)$.
\end{lemma}

\proof 
Let $V_5$ be a decomposing five-plane, and suppose that the corresponding morphism 
$K_5\lra \fg_2/V_{11}$ is surjective, so that its kernel is two-dimensional. Then by 
Proposition \ref{constant rank} and Table \ref{table} the subscheme
$$S\cap G(2,V_5)=\PP(V_{11})\cap X_{ad}(G_2)\cap G(2,V_5)$$
is the intersection of a cubic scroll (or cone) with a line,  hence it has length at 
most two since $S$ contains no line. 
So $\alpha(J)$ is contained in $\cD(\psi)$. 

Conversely, if $V_5$ belongs to $D(\psi)$, $L=\PP(V_{11})$ meets $\PP(K_5)$ along a codimension two subspace, that cuts the (possibly
degenerate) cubic scroll $X_{ad}(G_2)\cap G(2,V_5)$ along a length three subscheme of $S$ (once again, this intersection cannot be 
positive dimensional, since it is defined by quadrics and $S$ contains no line or conic.)
\qed 

\begin{lemma}\label{cork psi}
$\psi$ has constant corank one over $\alpha(J)$ and $\alpha$ is an isomorphism onto its image. In particular, $J$ is smooth.
\end{lemma}

\begin{proof}
If $\PP(K_5)\cap L$ has dimension three, $L$ cuts the corresponding cubic scroll along a degree $3$ curve which is contained inside $S$. But there is no such curve in $S$.
Since $\alpha(J)$ is a degeneracy locus by Lemma \ref{JisDpsi}, for $L$ generic it is singular where the rank of $\psi$ drops by more than one, which never happens.
So $\alpha(J)$ is smooth, and we conclude by injectivity of $\alpha$.
\end{proof}

In words, we conclude that $J$ parametrizes pairs 
$(V_5,H)$, where $V_5\subset V_7$ is a decomposing five-plane and 
$H\subset \PP(\fg_2)$ is a hyperplane through $S$ that also 
contains the cubic scroll $X_{ad}(G_2)\cap G(2,V_5)$; moreover this hyperplane is unique, hence there is a morphism
\begin{equation}\label{J}
    \pi : J \lra L^\perp\simeq\PP^2.
\end{equation}

We are now ready to describe the structure of $J$, using results from Section \ref{sect cubicscrolls}.
Because of \cite[Lemma 7]{kr}, the fiber of $\pi$ over the hyperplane $H$ can be identified 
with the 
Hilbert scheme of cubic surface scrolls in $X_{ad}(G_2)\cap H$, so
\begin{equation}\label{union}
    J=\bigcup_{[H]\in L^\perp} \Upsilon(X_{ad}(G_2)\cap H).
\end{equation}

Let $C$ be the intersection of $L^\perp \subset\PP(\fg^\vee_2)$ with the dual variety $\Delta$ of 
$X_{ad}(G_2)$; since $L$ is generic, this is a smooth plane sextic curve.
Denote by $g:\Sigma\lra L^\perp\simeq\PP^2$  the double cover branched over the sextic $C$, so that $\Sigma$ is a smooth K3 surface.

\begin{coro}\label{J irred}
$J$ is irreducible. The Stein factorization of $\pi: J\lra L^\perp$ is
\begin{equation}\label{stein for J}
    J  \xrightarrow{f} \Sigma \xrightarrow{g} L^\perp,
\end{equation}
where $f$ is an étale locally trivial $\PP^2$-fibration.
\end{coro}

\begin{proof} 
Via \eqref{union}, the map \eqref{J} is nothing but the restriction of $p_g$ in \eqref{relative scrolls} over $L^\perp\subset \PP(\fg^\vee_2)$. Generically $L^\perp\cap Z_1=\emptyset$, then the Stein factorization follows from Proposition \ref{two veronese}. 
\end{proof}

The  surface $\Sigma$ is the K3 surface of degree two which is associated to $S$ (together with a Brauer class), 
in the context of Homological Projective Duality for the
adjoint variety of $G_2$ \cite[section 6.4]{kuznetsov-hpd}.

Moreover, in \cite{kr} the authors give an explicit description of $\Sigma$ in terms of stable vector bundles on $S$. Indeed, denote by $\mathcal{E}^\sigma$ and $\mathcal{E}^\tau$, respectively, the dual of the tautological bundle and the tautological quotient bundle over $G(3,U_6)$, both of rank three; over the Lagrangian Grassmannian $LG(3,U_6)$ the two bundles are isomorphic, we denote them by $\widetilde{\mathcal{E}}$.

For $[H]\in L^\perp$, so that  $F=X_{ad}(G_2)\cap H$ contains $S$, recall from  Section \ref{sect cubicscrolls} that there is a regular map $\psi_H:  F\lra G(3,U_6)$ when $F$ is smooth, and a rational map $\psi_H:  F\dashrightarrow LG(3,U_6)$ when $F$ is nodal. Their restrictions $\psi_S$ to $S$ 
are always regular. \cite[Theorem 2]{kr} and its proof yield the following statement.

\begin{theorem}\label{desc Sigma kr}
$\Sigma$ is isomorphic with the moduli space $M_\mathcal{H}(3,\mathcal{H},3)$ of $\mathcal{H}$-stable vector bundles with Mukai vector $(3,\mathcal{H},3)$.
The fiber over $[H]$ of the double cover $g: \Sigma\lra L^\perp$ is
\begin{itemize}
    \item 
    the classes of the two bundles $\mathcal{E}_H^\sigma=\psi_S^*\mathcal{E}^\sigma$ and $\mathcal{E}_H^\tau=\psi_S^*\mathcal{E}^\tau$ when $[H]\notin C$,
    \item the class of the bundle $\psi^*_S \widetilde{\mathcal{E}}$ when $[H]\in C$.
\end{itemize}
\end{theorem}

\smallskip \noindent {\it Definition.} 
We say that a cubic surface scroll in $G(3,U_6)$ is \emph{of $\sigma$-type} if it is contained in the Schubert variety $Fl(U_1,3,U_6)$ of spaces containing some line $U_1\subset U_6$.
It is \emph{of $\tau$-type} if it is contained in the Grassmannian $G(3,U_5)$ for some $U_5\subset U_6$. (Note that 
$Fl(U_1,3,U_6)$ and $G(3,U_5)$ are both isomorphic with $G(2,5)$.)

\medskip

When $[H]\notin C$, both $\mathcal{E}_H^\sigma$ and $\mathcal{E}_H^\tau$ have a six-dimensional space of global sections: in the projectivizations of those spaces lie the two copies of $\PP^2$ (two Veronese surfaces, see \cite[Proposition 4]{kr}) parametrizing the fiber of $f:J\lra \Sigma$ over $g^{-1}([H])$. More precisely, and taking into account also the singular case, one gets:

\begin{prop}\label{section of E}
For $\mathcal{E}\in \Sigma\simeq M_\cH(3,\mathcal{H},3)$, 
a finite scheme $Z\in S^{[3]}$ belongs to $f^{-1}(\mathcal{E})$ if and only if it is the zero locus of some section of $\mathcal{E}$.

For $\lbrace\cE^\sigma_H,\cE^\tau_H\rbrace$ the fiber over $[H]\notin C$, all subschemes in $f^{-1}(\cE^\sigma_H)$, resp. $f^{-1}(\cE^\sigma_H)$, are a linear section of the pullback via $\psi_H$ of a cubic surface scroll of $\sigma$-type (resp. $\tau$-type). This property characterizes the two fibers.
\end{prop}

\begin{proof}
See \cite[proof of Lemma 8]{kr} for the smooth case. The nodal case is completely analogous, once we have observed that the maximal quadrics $Q_L\subset LG(3,U_6)$, in the singular case of Section \ref{sect cubicscrolls}, are the zero loci of the non-zero sections of the dual tautological vector bundle.
\end{proof}

The moduli space $M_\cH(3,\mathcal{H},3)$ will play a crucial rôle in the sequel. In the next subsection we will take a different perspective on the involution $\varphi$, but the moduli space  will soon return to stage.

\subsection{The nef and movable cones}\label{mov and ample}

The goal of this section is to prove the existence of a (unique) flop over $S^{[3]}$. We will also gather some information about its extremal contractions, that will be made more precise in the next subsections.

Recall that the second cohomology group  
\begin{equation}\label{H2dec}
H^2(S^{[n]},\ZZ)\simeq H^2(S,\ZZ)\oplus \ZZ\delta, 
\end{equation}
where $2\delta$ is the class of the divisor of non-reduced schemes.
The embedding of $H^2(S,\ZZ)$ is given by
$\xi\mapsto \xi_n:=HC^* \xi^{(n)}$, where $HC:S^{[n]}\to S^{(n)}$ is the Hilbert-Chow morphism and $\xi^{(n)}$
is the class in the second cohomology group of  the symmetric product, whose pullback to $S^n$ is the sum of the pullbacks of $\xi$ itself via all the projections $p_i:S^n\to S$, i.e.
$\xi^{(n)}=\sum_{i=1}^n p_i^* \xi$.

\medskip

We first describe the movable cone and the nef cone of $S^{[3]}$. 
The following result is \cite[Proposition 4.1]{beri} ($d=2$ for $t=9$) and \cite[Lemma 3.6]{beri} with 
$n=3$ and $t=4n-3$. We write down a direct proof for the reader's convenience.

\begin{prop}\label{twochambers}\label{invariant is nef}
The nef cone of $S^{[3]}$ is 
$$\mathrm{Nef}(S^{[3]})=\langle \mathcal{H}_3, \mathcal{H}_3-2\delta\rangle .$$
The movable cone has two chambers,  exchanged by the action of $\varphi$:
\[ \mathrm{Mov}(S^{[3]})=\mathrm{Nef}(S^{[3]})\cup \varphi^* (\mathrm{Nef}(S^{[3]})). \]
\end{prop}

\begin{proof}
The structure of the movable cone of $S^{[3]}$ is described in \cite[Proposition 13.1]{bayermacri} in terms of Pell's equations: 
it is the interior of the convex cone generated by $\mathcal{H}_3$ and $17\mathcal{H}_3-9\cdot 4\delta$, the minimal solution of $X^2-18 Y^2=1$ being $(17,4)$. The walls in the movable cone are spanned by vectors of the form $X\mathcal{H}_3-18Y\delta$ for $(X,Y)$ a positive solution of $X^2-72 Y^2=8+\alpha^2$, and $\alpha\in \lbrace 1,2 \rbrace$; this is 
\cite[Theorem 12.1 and Theorem 12.3]{bayermacri}, translated in terms of generalized Pell's equations using \cite[Lemma 2.5]{cattaneo} (see also \cite[Remark 2.8]{cattaneo}). Actually, the vectors cutting chambers of the movable cone are those for which $\frac{2Y}{X}<\frac{4}{17}$. (These Pell's equations will appear again in the proof of Lemma \ref{base irr}, where their relation with the birational geometry of $S^{[3]}$ will become apparent.)

One can immediately check that there are no integral solutions when $\alpha=2$. So we are left to consider solutions of $X^2-72Y^2=9$, which in turn are in the form $(3Z,Y)$ with $Z^2-8Y^2=1$; the latter is a Pell's equation with minimal positive solution $(3,1)$. 
All the solutions of this equation can be found recursively by letting
$$(Z_{k+1},Y_{k+1})=(3Z_k+8Y_k,Z_k+3Y_k).$$ Moreover $\frac{Z_{k+1}}{Y_{k+1}}<\frac{Z_k}{Y_k}$. The first two solutions are $(3,1)$, $(17,6)$. Note that for the second one we have $2\frac{Y_2}{3Z_2}=\frac{4}{17}$, so this solution corresponds to a boundary of the movable cone, and 
there is therefore exactly one wall inside the movable cone. This also means that the ample cone is the interior of the cone generated by $\mathcal{H}_3$ and $9\mathcal{H}_3-18\delta$; this last vector is proportional to $\mathcal{H}_3-2\delta$, which is the generator of the invariant lattice for the action of $\varphi$ by \cite[first case of Proposition 2.2]{beri}. Then the action of $\varphi$ in $\NS(S^{[3]})$ is $-R_{\mathcal{H}_3-2\delta}$, the opposite of the reflection with respect to $\mathcal{H}_3-2\delta$ \textcolor{black}{\cite[Proposition 2.1]{beri}}, exchanging the two rays of the movable cone, hence the two chambers. 
\end{proof}

\medskip\noindent {\it Remark}. 
For future use, let us record that the solution $(X,Y)=(9,1)$ giving the second wall corresponds, going back to \cite[Theorem 12.1]{bayermacri} via \cite[Lemma 2.5]{cattaneo}, to the class $a=-(2,-\cH,5)\in H^*_{alg}(S,\ZZ)$, with $a^2=-2$. Also,  $(a,v)=1$ for $v=(1,0,-2)$. 
(Here and in all the sequel, since $Pic(S)=\ZZ\cH$ we identify $H^*_{alg}(S,\ZZ)$ with $\ZZ\oplus\ZZ\cH\oplus\ZZ$.)
\medskip

The wall-and-chamber decomposition of the movable cone carries a lot of information about the birational geometry of $S^{[3]}$: 
each chamber corresponds to a pair $(M,f)$, where $f:S^{[3]}\dashrightarrow M$ is a birational, not biregular map between hyperK\"ahler manifolds, unique up to automorphisms of the two manifolds \cite[Theorem 1.5 and Lemma 6.22]{markmansurvey}. Walls in the \textit{interior} of the movable cone correspond to flopping contractions and crossing them, in the sense of \cite{bayermacri}, gives rise to flops to hyperK\"ahler birational models.

\begin{coro}\label{unique model}
Any hyperK\"ahler birational model of $S^{[3]}$ is isomorphic to $S^{[3]}$.
\end{coro}

\begin{proof}
Let $f:S^{[3]}\dashrightarrow M$ be a birational map, with $M$ hyperK\"ahler. By Proposition \ref{twochambers}, the pullback of the ample cone of $M$ via $f$ is the interior of either $\rm{Nef}(S^{[3]})$ or $\varphi^*\rm{Nef}(S^{[3]})$. Hence either $f$ or $f\circ \varphi$ sends K\"ahler classes on $S^{[3]}$ to K\"ahler classes on $M$, and then it is an isomorphisms by the Torelli Theorem for hyperK\"ahler manifolds.
\end{proof}

When we fix such an identification $M=S^{[3]}$, $\varphi$ corresponds to the flop associated to the wall and the chamber $\varphi^*(\mathrm{Nef}(S^{[3]}))$ is associated to the pair $(S^{[3]},\varphi)$.
By Bayer-Macrì's study of the Mori cone of $S^{[3]}$ (as a cone in $H^2(S^{[3]},\RR)$, see \cite[Section 12]{bayermacri}), we know that there is an extremal 
contraction associated to $\varphi$.
We call this flopping contraction $c:S^{[3]}\lra N$, with $N$ a normal irreducible projective variety.

The diagram associated to the flop is
\begin{equation}\label{phi as flop}
\xymatrix{ S^{[3]}\ar@{..>}[rr]^{\varphi}\ar[dr]_{c} & & S^{[3]}\ar[dl]^{d}  \\
   & N &}
\end{equation}

As a corollary of Proposition \ref{twochambers}, $\varphi$ is the only flop on $S^{[3]}$! More precisely, it is the only flop for which the space obtained by the transformation is a hyperK\"ahler manifold (contrary to what can happen for certain Mukai flops). This observation will play a key r\^ole in Section \ref{sec: mukai flop}.

\subsection{Interpretations in terms of wall-crossing}
In this section we take a closer look at the description of the flop in terms of moduli spaces of objects in the derived category of $S$. This approach is mainly due to Bayer-Macrì; \cite{bayermacri1,bayermacri,bayer} will be our main references for this section. The main idea is that the moduli space changes when the stability condition does; more precisely, when one crosses certain {\it walls} in the manifold $\Stab(S)$ parametrizing stability conditions. In our setting, the flop $\varphi$ can be realized through such  a wall-crossing, and we will deduce precise information about it.

Actually, the  manifold  $\Stab(S)$ is not completely described, but we will use as a substitute an explicit submanifold $\Stab^\dagger(S)$ constructed in \cite{bridgeland-K3}. 
This submanifold parametrizes   stability conditions of the form $$\sigma_{\alpha,\beta}=
(\Coh^{\beta}(S),
Z_{\alpha,\beta}) \quad \mathrm{for} \quad  
(\beta,\alpha)\in \RR\times \RR_{> 0},$$ 
where $\Coh^{\beta}(S)$ is a certain abelian subcategory of $D^b(S)$, and the central charge is the homomorphism $Z_{\alpha,\beta}=(e^{i\alpha \cH+\beta \cH},-): H^*_{alg}(S,\ZZ)\lra\CC$. In the sequel we stick to these stability conditions and use them to analyze our wall-crossing.

\smallskip
Any non-zero Mukai vector $u$ induces a wall-chamber decomposition of 
$\Stab^\dagger(S)$ \cite[Corollary 3.5]{bayer}, the complement of the walls being the set of stability conditions
$\sigma$ for which any $\sigma$-semistable object with Mukai vector $u$ is also $\sigma$-stable. 

The stability condition $\sigma$ is \textit{generic with respect to $u$} if it does not lie on a wall. In that case, there exists a moduli space $M_\sigma(u)$ of $\sigma$-stable objects with Mukai vector $u$ in the derived category of $S$. For example, for a well-chosen $\sigma_+$, we can get the following moduli spaces:
\begin{itemize}
\item For $a=-(2,-\cH.5)$, $M_{\sigma_+}(-a)$ is a single point \cite[Lemma 7.1]{bayermacri1}, that we can identify with the bundle $\cU_2$ on $S$: by \cite[Theorem 3]{mukai-biregular}, $\cU_2^\vee$ is the unique stable bundle on $S$ whose Mukai vector is $a'=(2,\mathcal{H},5)$ (and the change of sign in the Mukai vector amounts to taking duals \cite[Definition 6.1.2]{huy-lehn}). 
As a consequence, $M_{\sigma_+}(a)$ is also a single point, corresponding to  the shifted bundle $\cU_2[1]$.
   
 \item For $w=(3,-\cH,3)$, $M_{\sigma_+}(w)$ is isomorphic to $\Sigma=M_\mathcal{H}(3,\cH,3)$ (see Theorem \ref{desc Sigma kr}, with a change of sign for taking duals). Indeed, $M_{\sigma_+}(w)$ is a K3 surface by \cite[Lemma 7.2]{bayermacri1}, moreover, there exists a stability condition $\tau$ such that  $M_\tau(w)\simeq \Sigma$ by \cite[Thm 4.4]{bayer}. Since  $M_{\sigma_+}(w)$ is obtained from  $M_\tau(w)$ by some wall-crossings, they must be birational, hence isomorphic. 
\item For $v=(1,0,-2)$, $M_{\sigma_+}(v)$ is isomorphic to $S^{[3]}$, its closed points corresponding to ideal sheaves of length three subschemes of $S$, see \cite[Theorem 4.4]{bayer} and \cite[Example p.71]{sawon-survey}. We will be more precise in Proposition \ref{black box}.
\end{itemize}

Starting from the latter interpretation of $S^{[3]}$ as $M_{\sigma_+}(v)$, 
we will change the stability condition and use wall-crossing to describe the flop $\varphi$. We first locate the relevant wall.

The numerical condition that for some object $T$, its Mukai vector $t$ has the same phase as $v$, that is, $\arg(Z_{\alpha,\beta}(t))=\arg(Z_{\alpha,\beta}(v))$, 
defines either a semicircular or a vertical 
real codimension one submanifold of $\Stab^\dagger(S)$, see \cite[1.3]{maciocia}.
A \textit{potential wall} associated to the primitive lattice containing $\langle v,t\rangle$ is any connected component of such a submanifold \cite[Definition 5.2]{bayermacri}.
Being a potential wall is a necessary condition for being a wall.

Potential walls in $\Stab^\dagger(S)$ that are walls inducing  non-trivial birational transformations
are exactly those for which 
either $t=a$  as in \cite[Theorems 12.1]{bayermacri}, or  $t=s$
 as defined in \cite[Theorems 12.3]{bayermacri}.
Moreover they correspond via wall-crossing to an extremal contraction of $S^{[3]}$ if and only if $\theta_v(t^\perp\cap v^\perp)$ contains the corresponding  wall of the nef cone of $S^{[3]}$, where $\theta_v:v^\perp\lra \NS(S^{[3]})$ is the Mukai morphism \cite[Remark 2.17]{bayermacri}.
Since $S$ has Picard rank one, the latter nef cone has only two walls, giving respectively the Hilbert-Chow contraction for $t=s=(0,0,1)$, and the flopping contraction $c$ for  $t=a=-(2,-\cH,5)$ (for the latter, see the Remark after Proposition \ref{twochambers}).

\medskip\noindent\textit{Definition.} Let $\Lambda$ be the rank two, primitive sublattice 
$$\Lambda =\langle v,a\rangle \subset H^*_{alg}(S,\ZZ), \qquad 
M=\begin{bmatrix}
4 & 1 \\
1 & -2 
\end{bmatrix},$$ 
with $M$  the associated Gram matrix.

\medskip\noindent\textit{Definition.}
Let $W\subset\Stab^\dagger(S)$ be the 
quarter circle parametrized by
\[
\left(\frac{1}{6}cos(t)-\frac{1}{2},\frac{1}{6}sin(t)\right),
 \qquad 0<t<\frac{\pi}{2}.
\]
Let $W'\supset W $ be the semicircle parametrized by $0<t<\pi$. 

\smallskip
By \cite[Theorem 3.1]{bayer} all the pairs $(\beta,\alpha)\in W'$ correspond  to stability conditions, except for $(\beta,\alpha)=(-\frac{1}{2}, \frac{1}{6})$, for which $\mathfrak{R}e( Z_{\alpha,\beta}(-a))=0$.
An explicit computation shows that $Z_{\alpha,\beta}(v) $ and $Z_{\alpha,\beta}(a)$ have the same phase  for any $(\beta,\alpha)\in W'$, hence $W$ is a potential wall for $\Lambda$. By the discussion above, $W$ is in fact a wall,  giving   via wall-crossing the flopping contraction $c$.
A similar computation shows that the walls associated to $\langle 
s,v \rangle$, and giving the Hilbert-Chow contraction, lie in the $\alpha$-axis of the $(\beta,\alpha)$-plane.

\medskip
From now on, we fix a stability condition
$\sigma_0=\sigma_{\alpha_0,\beta_0}$ on the wall $W$; an associated slope function $\nu_0$ can be defined on $\Coh^{\beta_0}(S)$, see \cite[(4)]{bayer}. The full subcategory of $\Coh^{\beta_0}(S)$
of objects with the same phase $\phi=\frac{1}{\pi}\arg(Z_{\alpha_0,\beta_0}(v))$ as $v$ and semistable with respect to $\nu_0$ will be denoted by $\cP(\phi)$.
An object $E$ in the derived category is \textit{$\sigma_0$-semistable of phase $\phi$} if some shift $E[k]$ lies in $\cP(\phi)$, and \textit{$\sigma_0$-stable} if additionally it is simple in $\cP(\phi)$, i.e. it has no non-trivial subobject.
Since $\cP(\phi)$ is abelian of finite length, every object admits a finite Jordan-H\"older filtration, which is trivial if and only if the object is $\sigma_0$-stable. Objects with the same Jordan-H\"older factors are called S-equivalent with respect to $\sigma_0$.

We also fix two stability conditions on the two sides of the wall, namely
$\sigma_\pm=\sigma_{\alpha_0\pm\epsilon,\beta_0}$, which can be chosen to be generic with respect to $v,w$ and $a$
since the set of walls is locally finite, see \cite[Section 6.4]{macrischmidt} (for convenience we keep $\beta_0$ fixed). The following result is due to Bayer and Macrì.

\begin{prop}\label{black box}
There is a natural isomorphism $S^{[3]}\simeq M_{\sigma_+}(v)$ sending $Z$ to
$\cI_Z$. 
Via this isomorphism, the wall $W$ induces via wall-crossing the flopping contraction $c:S^{[3]}\lra N$, more precisely:
\begin{enumerate}
    \item $c$ contracts a curve $C$ if and only if, for $p_1, p_2 \in C$ general, the associated objects in $D^b(S)$ are S-equivalent with respect to $\sigma_0$.
    \item For any $p$ lying in a contracted curve,     
    the Mukai vector of every Jordan-H\"older factor 
    with respect to $\sigma_0$ lies in $\Lambda$. 
\end{enumerate}
Moreover (1), (2) also hold for the contraction $d_{\sigma_0}:M_{\sigma_-}(v)\lra N$ on the other side of the wall.
\end{prop}

\begin{proof}
Stability conditions in $\Stab^\dagger(S)$ corresponding to the natural isomorphism described in the statement correspond to $(\beta,\alpha)$'s with $\alpha\gg 0$ and $\beta<0$, see for example \cite[Theorem 4.4]{bayer}.
The analysis of the potential walls carried before the proof shows that those conditions lie in the same chamber as $\sigma_+$.
Moreover the wall $W$ induces the contraction $c$ since $\RR_{>0}(\cH_3-2\delta)\subset \NS(S^{[3]})$ is a connected component of $\theta_v(v^\perp\cap a^\perp)$.

Now, item (1) is \cite[Theorem 2.18 (b)]{bayermacri}.
For (2), let $Q$ be the last Jordan-H\"older factor of an object $E$. By definition of the filtration, their central charges with respect to $\sigma_0\in W$ are aligned, so their Mukai vectors lie in $\Lambda$ by definition of potential wall associated to $\Lambda$.
Then we can proceed by induction. Finally, the proof is identical for both sides of the wall.
\end{proof}

Different types of walls are defined in \cite[Definition 2.20]{bayermacri}.
We exclude an unpleasant case.

\begin{lemma}\label{not totally semistable}
The wall $W$ is not totally semistable.
\end{lemma}

\begin{proof}
By \cite[Theorem 5.7]{bayermacri}, a wall is totally semistable if there is either an effective spherical class with negative product with $v$, or an isotropic class $u\in \Lambda$ satisfying $(v,u)=1$. 

An isotropic class $u$ with $(v,u)=1$ would generate with $v$ a unimodular, even, rank two sublattice of $\Lambda$, 
which is absurd since $\Lambda$ has discriminant $-9$.
For the first case, we note by solving $(xv+ya)^2=-2$ for $x,y\in \ZZ$, that the only spherical classes in $\Lambda$ are $a$ and $-a$.
By \cite[Proposition 5.5]{bayermacri}, only $a$ is effective, and we are done since $(a,v)=1>0$.
\end{proof}

Since $W$ is not totally semistable, the space $M_{\sigma_0}^s(v)$ of $\sigma_0$-stable objects is non-empty. Then openness of stability \cite[Theorem 4.2]{bayermacri} ensures that $M_{\sigma_0}^s(v)$ admits an open embedding in both $M_{\sigma_+}(v)$ and $M_{\sigma_-}(v)$. The flop 
$$\varphi_{\sigma_0}:S^{[3]}\simeq M_{\sigma_+}(v)\dashrightarrow M_{\sigma_-}(v)\simeq S^{[3]}$$ is induced by this identification, see \cite[proof of Theorem 1.4 (b)]{bayermacri1}; recall that $M_{\sigma_-}(v)\simeq S^{[3]}$ by Corollary \ref{unique model}. This yields an identification of $\varphi_{\sigma_0}$ with $\varphi$ and of $d_{\sigma_0}$ with the flopping contractions $d:S^{[3]}\lra N$, so that we recover \eqref{phi as flop}.

\subsection{The indeterminacy locus}\label{sec: inteterminacy} 

The main advantage of describing the flop in terms of wall-crossing is that  we now have a precise description of the exceptional loci of $c$ and $d=d_{\sigma_0}$.

\medskip\noindent \textit{Definition.} The indeterminacy locus of $\varphi$ will be denoted by $I$. We call \textit{base of the contraction} the image of the exceptional locus of $c$ in $N$.

\medskip

In this subsection we deduce a precise description of $I$, and ultimately show that it identifies with $J$.

\begin{lemma}\label{base irr}
The indeterminacy locus $I$ of $\varphi$ coincides with the exceptional locus of $c$. The base $c(I)$ of the contraction $c$ is an irreducible surface in $N$, birational to $\Sigma$.
\end{lemma}

\begin{proof}
According to Proposition \ref{black box}, to prove the second claim we need to analyse the possible Jordan-H\"older factors of the strictly $\sigma_0$-semistable objects in $D^b(S)$ with Mukai vector $v$. Suppose that, up to a shift, such an object $E$ fits into an exact sequence in $\cP(\phi)$, as an extension of two $\sigma_0$-semistable objects $F_1, F_2$ with Mukai vectors $a_1, a_2=v-a_1\in\Lambda$. For $E$ to be contained  in a curve contracted by $c$, we need $ext^1(F_2,F_1)>1$: by \cite[Theorem 5.7]{bayermacri}, the objects with the same image as $E$ 
are exactly those that are also extensions of $F_1$ by $F_2$.

We claim that the only possible Mukai vectors are $a_1=a, a_2=w$.
Indeed, since $W$ is not totally semistable by Lemma \ref{not totally semistable}, 
any effective class in $\Lambda$ has non-negative intersection with respect to the Mukai pairing with $v$ so, by \cite[Proposition 5.5]{bayermacri},
$(a_1,a_2)$ must be of the form $(a,v-a)$ for $a$ listed in \cite[Remark 2.8]{cattaneo}. This can be translated via \cite[Lemma 2.5]{cattaneo} in terms of solutions to certain Pell's equations, which coincide with the equations that already appeared in the proof of Proposition \ref{twochambers} -- namely the equations $X^2-72Y^2=8+\alpha^2$, such that $X\mathcal{H}_3-18Y\delta$ is a multiple of $\mathcal{H}_3-2\delta$.
We  have shown in the proof of Proposition \ref{twochambers} that the only solution is $(9,1)$, which  corresponds to $a=-(2,-\cH,5)$, and we conclude that the base of $c$ is birational to $M_{\sigma_+}(a)\times M_{\sigma_+}(w)\simeq \{pt\}\times \Sigma$, as claimed.

The same analysis also applies to $d=d_{\sigma_0}$. In fact, \cite[Proposition 9.1]{bayermacri} allows to produce, for a generic element of the base,
a positive dimensional fiber for both $c$ and $d$. In other words, the bases of the contractions $c$ and $d$ coincide. This implies our first claim.
\end{proof}

Now we take a closer look to the subcategory $\Coh^{\beta_0}(S)$ associated to $\sigma_0$ and $\sigma_+$. According to \cite[Section 2]{bayer}, it can be
constructed from two subcategories $T^{\beta_0}$ and $F^{\beta_0}$ of 
$Coh(S)$, as the extension-closure $\langle T^{\beta_0},F^{\beta_0}[1] \rangle$ in $D^b(S)$.
One  gets an abelian category in which short exact sequences are the exact triangles of the derived category whose objects lie in $\Coh^{\beta_0}(S)$. 

An easy computation 
shows that $\cI_Z, \cE^\vee\in T^{\beta_0}$ for any $Z$ in $S^{[3]}$ and any vector bundle $\cE$ corresponding to a point of $\Sigma=M_\cH(3,\cH,3)$. Moreover  $\cU_2\in F^{\beta_0}$.

\begin{lemma}\label{lemma sequence for I}
A length three subscheme $Z$ of $S$ defines a point of $I$ if and only if the sheaf $\cI_Z$
lies in a a short exact sequence
\begin{equation}\label{extension} 
0\lra \cU_2\stackrel{\nu^t}{\lra} \mathcal{E}^\vee\lra \cI_Z\lra 0
\end{equation}
for some vector bundle $\cE$ from $\Sigma=M_\cH(3,\cH,3)$.
Moreover $Z$ and $Z'$ lie in the same fiber of $c$ if and only if the two associated exact sequences \eqref{extension} involve the same bundle $\cE$. 
\end{lemma}

\begin{proof}
By Lemma \ref{base irr}, the only possible Mukai vectors for two objects in $\cP(\phi)$ of which $\cI_Z$ is an extension are $a$ and $w$. An explicit computation of the phase $\phi_+$ of $a,v,w$ with respect to 
$\sigma_+$ shows that $\phi_+(w)<\phi_+(v)<\phi_+(a)$. Therefore the first subobject $A$ in the Jordan-H\"older filtration  of an object $\cI_Z$ in $\cP(\phi)$ has necessarily Mukai vector $w$ \cite[def 2.1 (b)]{bayermacri}.
This yields a short exact sequence in $\cP(\phi)$:
\begin{equation}\label{JH in proof1}
 0\lra A \lra \cI_Z \lra U\lra 0,
\end{equation}
with $v(A)=w$, $v(U)=a$.
Now consider \eqref{JH in proof1} as a sequence in $\Coh^{\beta_0}(S)$.  Objects of $M_{\sigma_+}(v)$ are concentrated in degree $0$ by Proposition \ref{black box}, hence
by \cite[Proposition 2.4]{bayer}, $A$ is a semistable sheaf with Mukai vector $w$, so $A=\cE^\vee$ for some vector bundle $\cE$ corresponding to a point of $\Sigma$. Moreover, again by \cite[Proposition 2.4]{bayer}, \eqref{JH in proof1} is equivalent to a long exact sequence of sheaves
\[
0\lra \cK \lra \cE^\vee \stackrel{\nu^t}{\lra} \cI_Z \lra \cQ\lra 0
\]
with $\cK\in F^{\beta_0}$, of rank two  (we cannot have $\nu^t=0$, since $\cK\in F^{\beta_0}$ and $\cE^\vee\in T^{\beta_0}$). We claim that $\nu^t$ is necessarily surjective. By contradiction, suppose that $\cQ\neq 0$: then $\cQ$ is torsion. Since $\cE^\vee$ is semistable, $\cK$ has slope $\mu_{\cH}(\cK)=\frac{c_1(\cK)\cdot \cH}{2}\leq -\frac{\cH^2}{3}$, so $c_1(\cK)=-\lambda \cH$ with $\lambda>0$.
The Mukai vectors of $\cK$ and $\cQ$ are respectively $v_1=(2,-\lambda \cH,r)$ and $v_2=(0,-(\lambda-1)\cH,r-5)$.
For $v_2$ to be the Mukai vector of a coherent sheaf we need $-(\lambda-1)\geq 0$, so $\lambda=1$. Hence $v_2=(0,0,r-5)$ with $r>5$, which is absurd since then $v_1^2=18-4\cdot r<-2$. So $\cQ=0$, hence $\cK$ has Mukai vector $-a$ and thus $\cK=\cU_2$:
we have obtained (\ref{extension}).
The last assertion is clear.
\end{proof}

\begin{lemma}\label{hom(F,E)}
For any vector bundle $\cE$ from $M_\cH(3,\cH,3)$, we have 
$$hom(\mathcal{E},\cU_2^\vee)=3, \quad ext^1(\mathcal{E},\cU_2^\vee)=ext^2(\mathcal{E},\cU_2^\vee)=hom(\cU_2^\vee,\mathcal{E})=0.$$
\end{lemma}

\proof Riemann-Roch yields $\chi(Hom(\mathcal{E},\cU_2^\vee))=3$. Since $\mathcal{E}$ is semistable, $hom(\cU_2^\vee,\mathcal{E})=0$. Suppose $ext^1(\mathcal{E},\cU_2^\vee)\ne 0$, hence, there is a 
nontrivial extension $0\ra \cU_2^\vee\ra \mathcal{K}\ra \mathcal{E}\ra 0$, with $\mu_\mathcal{H}(\mathcal{K})=2\cH^2/5$. 

We claim that $\mathcal{K}$ must be stable. By contradiction, suppose it contains a destabilizing subsheaf $\mathcal{K}'$, with the induced extension 
$0\ra \mathcal{A}'\ra \mathcal{K}'\ra \mathcal{E}'\ra 0$. If $\mathcal{A}'$ and $\mathcal{E}'$ have rank smaller 
than $\cU_2^\vee$ and $\mathcal{E}$ respectively, then by semistability $c_1(\cA')\le 0$
and $c_1(\mathcal{E}')\le 0$, hence also $c_1(\mathcal{K}')\le 0$, a contradiction.
So $\mathcal{A}'$ or $\mathcal{E}'$ must be zero (otherwise both must have full rank, hence also $\mathcal{K}'$, which is excluded). If $\mathcal{E}'=0$, $\mathcal{K}'$ is a subsheaf of $\cU_2^\vee$ and cannot destabilize $\mathcal{E}$, so neither $\mathcal{K}$. If $\mathcal{A}'=0$, 
then we need $c_1(\mathcal{E}')=\mathcal{H}$, which implies that the inclusion of 
$\mathcal{E}'$ in $\mathcal{E}$ is an isomorphism in codimension one. But then the embedding of $\mathcal{E}'$ into $\mathcal{K}$ can be lifted to $\mathcal{E}$, which is impossible since this would trivialize the extension that defines $\mathcal{K}$. 

Now, the bundle $\mathcal{K}$ being semistable, it defines a point of a moduli space of dimension $v_\mathcal{K}^2+2$, the Mukai vector of $\mathcal{K}$ being $v_\mathcal{K}=(5,2\mathcal{H},8)$. But this gives $v_\mathcal{K}^2+2=
-6$, so the moduli space is empty -- a contradiction!\qed

\begin{lemma}\label{extension2}
Any nonzero $\nu\in Hom(\mathcal{E},\cU_2^\vee)$ yields an extension 
\begin{equation}\label{extension in lemma} 
0\lra \cU_2\stackrel{\nu^t}{\lra} \mathcal{E}^\vee\lra \cI_Z\lra 0
\end{equation}
for some length $3$ subscheme $Z\in S^{[3]}$. 
\end{lemma}

\proof Let $\mathcal{K}$ be the kernel of $\nu^t$ and $\mathcal{Q}$ its image. 
The rank of $\mathcal{K}$ is at most one since $\phi$ is nonzero. 
If $\mathcal{K}$ (hence also $\mathcal{Q}$) has rank one, the stability of $\cU_2$ and $\mathcal{E}$ implies that $c_1(\mathcal{K})\le -\cH$ 
and $c_1(\mathcal{Q})\le -\cH$, hence $c_1(\cU_2)\le -2\cH$, a contradiction. So $\mathcal{K}=0$ since $\cU_2$ is torsion free. 

Note that $\nu$ induces a morphism from $\wedge^2\cU_2=\cO_S(-1)$ to $\wedge^2\cE^\vee=\cE(-1)$, hence a non-trivial section $s$ of $\cE$. Since $\cE$ is stable, $s$ vanishes in codimension two and the associated Koszul complex is exact, hence the sequence $\cU_2\ra\cE^\vee\ra\cO_S$ is also exact.
So the cokernel of $\nu^t$ is a rank one subsheaf of $\cO_S$ with $c_1=0$,
hence the ideal sheaf of some finite scheme $Z$. Since $ch(\cE)=3+\cH$ and $ch(\cU_2)=2+\cH+3[pt]$, we get 
$$ch(\cO_Z)=1-(3-\cH)+(2-\cH+3[pt])=3[pt],$$ 
which means that  $\ell(Z)=3$. 
\qed

\medskip Let $\cF$ be the \textit{twisted} universal rank three vector bundle on 
$S\times\Sigma$ (see \cite[X.2.2 (ii)]{huybrechtsK3} for the definition and existence of $\mathcal{F}$).
Let $p_1$ and $p_2$ be the projections from $S\times \Sigma$ to the two factors. 
Since $\cF$ is twisted with respect to a Brauer class that is pulled-back from $\Sigma$, the push-forward $$\cG= p_{2*}Hom(\cF,p_1^*\cU_2^\vee)$$ makes sense and defines a twisted sheaf on $\Sigma$. By Lemma \ref{hom(F,E)}, $\cG$ 
 is actually a rank three twisted vector bundle on $\Sigma$, and its projectivization is well-defined. Moreover, Lemmas \ref{lemma sequence for I} and \ref{extension2} yield
  a  surjective morphism $$\theta : \PP_\Sigma(\cG)\lra I\subset S^{[3]}.$$

\begin{prop}\label{P=I}
$\theta$ maps $\PP_\Sigma(\mathcal{G})$ isomorphically to $I=J\subset S^{[3]}$ and we get a commutative diagram
\begin{equation}\label{theta}
\xymatrix{
\PP_\Sigma(\mathcal{G})\ar[dr]\ar[rr]^{\theta} && J\ar[dl]^{f}\\
& \Sigma &  }  
\end{equation}
Moreover the fibers of $f$ coincide with those of the restriction of $c$ to $J$.
\end{prop}

\begin{proof}
We consider any $\cI_Z$ lying in an exact sequence as in \eqref{extension in lemma} and we prove that $Z\in J$. To do that, we tensor (\ref{extension in lemma}) by $\cU_2^\vee$ and take cohomology. Since $\cU_2$ is stable on $S$ (hence simple) and rigid, we get an exact sequence 
$$   0\lra End(\cU_2)=\CC\lra Hom(\cE,\cU_2^\vee)\lra H^0(\cI_Z\otimes\cU_2^\vee)\lra 0.$$
Since $Hom(\cE,\cU_2^\vee)$ is three-dimensional by Lemma \ref{hom(F,E)}, we deduce that $H^0(\cI_Z\otimes\cU_2^\vee)$ is two-dimensional, which means that $Z$ only spans a codimension two subspace of $V_7$, that is, $Z\in J$. Moreover, the long exact sequence continues and yields 
$$Ext^1(\mathcal{E},\cU_2^\vee)\lra H^1(\cI_Z\otimes \cU_2^\vee)\lra Ext^2(\cU_2^\vee,\cU_2^\vee)\lra Ext^2(\mathcal{E},\cU_2^\vee).$$
By Serre duality $ext^2(\mathcal{E},\cU_2^\vee)=0$ and $ext^2(\cU_2^\vee,\cU_2^\vee)=1$, while we have $ext^1(\mathcal{E},\cU_2^\vee)=0$ by Lemma \ref{hom(F,E)}. So $h^1(\cI_Z\otimes \cU_2^\vee)=1$, meaning that the extension (\ref{extension in lemma}) is unique.

Hence $\theta$ is injective, 
with image $I=J$; and since  $J$ is smooth by Lemma \ref{cork psi},
$\theta$ must be an isomorphism. Finally, the fibers of $f$ parametrize subschemes of 
$S^{[3]}$ that are S-equivalent, since the S-equivalence class is determined by the isomorphism class of the vector bundle $\cE$ in (\ref{extension in lemma});
so they coincide with the non-trivial fibers of $c$.
\end{proof}

\subsection{The Mukai flop}\label{sec: mukai flop}
Since $\PP_\Sigma(\mathcal{G})\simeq J$ embeds in $S^{[3]}$ as a codimension two subvariety and is a $\PP^2$-fibration, one can perform a Mukai flop 
\begin{equation}\label{MukaiFlop}
\xymatrix{
\ & B\ar[dl]_{b}\ar[dr]^{\check{b}} & \\
S^{[3]}\ar@{.>}[rr]^{\tilde{\varphi}} && M
}
\end{equation}
where $M$ is again simply connected and admits an everywhere non-degenerate two-form \cite[Theorem 0.7]{mukai-flop}.
Recall that $b$ is just the blow-up of $S^{[3]}$ along $J$. 
The exceptional divisor $E$ is a $F\ell_3$-fibration over $\Sigma$;
the contractions to $S^{[3]}$ and to $M$ contract each copy of $F\ell_3\subset\PP^2\times\PP^2$ on one of the two factors.

It is of course tempting to imagine that $\tilde{\varphi}$ should coincide with 
$\varphi$, since as a consequence of Proposition \ref{twochambers} the latter is the unique flop of $S^{[3]}$, and the two transformations  have the same indeterminacy locus $I=J$. But the unicity statement only holds in the algebraic setting, while the result of a Mukai flop needs not always be K\"ahler; a counter-example was given by Yoshioka 
\cite[Section 4.4]{yoshioka}. So we need to provide an extra argument. This will be based on  an explicit extension of $\varphi$ to $B$.

\begin{prop}
The birational involution $\varphi$ lifts to a biregular involution $\tau$ of $B$, preserving the exceptional divisor $E$.
\end{prop}

\begin{proof}
First recall that 
$J$ was defined as the degeneracy locus of the morphism $\phi$ in 
(\ref{morphism-phi}). By Proposition \ref{Zdec}, the rank of $\phi$
never drops to four, so the restriction of $\phi$ to $J$ yields a long exact sequence 
$$0\lra \cQ^\vee\lra V_7^\vee\otimes \cO_J\lra p_*(q^*\cU_2^\vee)_{|J}\lra \cL\lra 0,
$$
for some line bundle $\cL$, and a vector bundle $\cQ^\vee$ whose fiber at $[V_5]$ is the space of linear forms on $V_7$ that vanish on $V_5$. The normal bundle of $J$ in $S^{[3]}$
can thus be described as $Hom(\cQ^\vee,\cL)=\cQ\otimes\cL$; its projectivization,
whose total space is the exceptional divisor $E$, can therefore be identified with $\PP(\cQ)$.
This means that if $Z\in S^{[3]}$ spans $V_5\subset V_7$, a point in $E$
above $Z$ can be identified with a hyperplane $V_6\supset V_5$. 

Recall that $V_5$ is a decomposing five-plane by Proposition \ref{Zdec}, hence $Z$ is contained in a unique hyperplane section $F=X_{ad}(G_2)\cap H$ containing $S$, and $R=F\cap G(2,V_5)$ is a cubic scroll. By Proposition \ref{compatible} and the remark that follows,  $X_{ad}(G_2)\cap G(2,V_6)\cap H$ is a surface $T$ of degree $6$. Thus, $T=R\cup R'$ for another surface $R'$ of degree three. The intersection of $R'$ with $L$ is then a length three subscheme $Z'$ of $S$, and mapping  $(Z,V_6)$ to $Z'$ defines a regular extension $\bar{\varphi}$ of $\varphi$ from  $S^{[3]}$ to $B$, coinciding with $\varphi$ outside $E$.
 
In general $V_6^\perp$ is not isotropic, and $F\ell(V_6)=X_{ad}(G_2)\cap G(2,V_6)$ is a copy of $F\ell_3$ by Proposition \ref{compatible}.
The surface $T$ is therefore cut out in $\PP^2\times \PP^2$ by a pencil of  hyperplanes, defined by $3\times 3$ matrices. Since it contains the cubic scroll $R=\PP^1\times \PP^2\cap H$ 
(see the proof of Proposition \ref{constant rank}), one of the hyperplanes in the pencil has to contain the linear span of $\PP^1\times \PP^2$, 
which implies that it is defined by a matrix of rank one. 
But then the corresponding hyperplane section of $\PP^2\times\PP^2$ is the 
union of $\PP^1\times\PP^2$ with a copy of $\PP^2\times\PP^1$, and $R'$ is a linear section of the latter. So $R'$ is another cubic scroll,
necessarily contained in $G(2,V'_5)$ for some $V'_5\subset V_6$, by \cite[Lemma 7]{kr}. 
This proves that $\bar{\varphi}$ sends $E$ to $J$, and by the universal property of blowups, $\bar{\varphi}$ lifts to an involution of $B$, sending $E$ to itself (and $(Z,V_6)$ to $(Z',V_6)$).
\end{proof}

The restriction of $\tau$ to $E$ is an automorphism over $L^\perp$ via $\pi\circ b_{|E}: E\lra L^\perp$ (see \eqref{J}) i.e. $\pi\circ b_{|E}=\pi\circ b_{|E}\circ \tau$.
Consider the Stein factorization \eqref{stein for J} and let $\lbrace \cE^\sigma_H,\cE^\tau_H\rbrace$ be the fiber over a generic $[H]\in L^\perp$ (recall Theorem \ref{desc Sigma kr} and the Definition that follows).
A general element $Z$ of $f^{-1}(\cE^\sigma_H)$ lies in the pullback via $\psi_H$ of a cubic scroll of $\sigma$-type $R$; for any $(Z,V_6)\in b^{-1}(Z)$, the scroll $R'$ such that $\tau(Z,V_6)=(R'\cap L,V_6)$ is the pullback of a scroll of $\sigma$ or $\tau$-type. By Corollary \ref{section of E}, this does not depend on the choice of $V_6$ and by continuity it also does not depend on the choice of $H$. Thus $\tau$ induces a birational,  hence biregular, involution on $\Sigma$, which is the identity or the covering involution according to whether $R'$ is still of $\sigma$-type or not. We denote this involution by $\gamma$.

\begin{lemma}
The involution $\gamma$ is the covering involution of $g:\Sigma\lra L^\perp$.
\end{lemma}

\begin{proof}
Recall that according to \cite{kr} our scrolls come from $G(3,U_6)$, in which copies of $\PP^2\times \PP^2$ are obtained by decomposing $U_6=A\oplus B$ into the sum of two three-dimensional subspaces, and considering the family of subspaces of the form $a\oplus b^\perp$, for $a,b$ one dimensional in $A$ and $B^\vee$, respectively. When a hyperplane section decomposes into two cubic scrolls, we have seen that it is defined by a rank-one pairing $\nu\in A^\vee\otimes B= \mathrm{Hom}(A,B)$, the condition being that $\langle \nu(a),b\rangle =0$. Then the two components are the families of spaces of the form $a\oplus b^\perp$ for $a\subset \mathrm{Ker}(\nu)$, which contain the  fixed hyperplane $ \mathrm{Ker}(\nu)\oplus B$,   or $b \subset \mathrm{Ker}(\nu^t)$, which contain the  fixed line
$\mathrm{Ker}(\nu^t)^\perp\subset B$. This shows that the two scrolls $R$ and $R'$ have different types. 
\end{proof}

\begin{coro}\label{flopdeMukai}
$\varphi=\tilde{\varphi}$ is a Mukai flop.
\end{coro}

\begin{proof}
Since $\tau$ coincides with $\varphi$ outside $E$, we have $\varphi=b\circ \tau\circ b^{-1}$, so
the birational maps $\varphi$ and $\tilde{\varphi}$ are obtained by composing $b^{-1}$ with two a priori different projections from $B$, respectively $b\circ \tau$ and $\check{b}$.

Consider $p\in \Sigma$. We denote
\[\PP^2_p=c^{-1}(\gamma(p))\quad \quad E_p=b^{-1}(\PP^2_p) \quad \quad \check{\PP}^2_p=\check{b}(E_p)\]
and by $b_p$, $\tau_p$, $\check{b}_p$ the restrictions of the corresponding morphisms.
Note that
$E_p=\lbrace (x,[H])\:|\: x\in H \rbrace\subset \PP^2_p\times \check{\PP}^2_p$ is the flag variety of $\PP^2_p$, and that $b_p$, $\check{b}_p$ are respectively the first and second projection. Also $b_p\circ \tau_p:E_p\lra \check{\PP}^2_p$ is a projection contracting a family of $\PP^1$'s, so it is either $b_p$ or $\check{b}_p$. Suppose that $b_p\circ \tau_p=b_p$ for one $p$. Then $\varphi$ could be extended over $\PP^2_p$ by imposing, for any $Z\in \PP^2_p$, $\varphi (Z)=b\tau(\zeta)$ for any $\zeta\in b^{-1}(Z)$, which is absurd by Proposition \ref{P=I}.
So $b_p\circ \tau_p$ coincide with $\check{b}_p$ for any $p$, hence $(b\circ \tau)|_E:E\lra J$ can be identified with the projection $\check{b}|_E$.
Hence, $\check{b}\circ(b\tau)^{-1}:S^{[3]}\dashrightarrow M$ extends to a biregular map. 
\end{proof}

The following diagram summarizes the situation;  $\tilde{\varphi}$ is the genuine Mukai flop; it is identified with $\varphi$ through the isomorphism between $M$ and $S^{[3]}$, and can be lifted to the regular involution $\tau$ of $B$:

\begin{equation}\label{MukaiFlopagain}
\xymatrix{
\ & B\ar[dl]_{b}\ar[dr]^{\check{b}}\ar[rr]^{\tau} & & B\ar[dr]^{b} & \\
S^{[3]}\ar@{.>}[rr]^{\tilde{\varphi}} && M\ar[rr]^{\simeq} && S^{[3]}
}
\end{equation}

\medskip
As an immediate consequence of 
Corollary \ref{flopdeMukai} together with \eqref{phi as flop}, 
the Mukai flop is a flop also in the sense of \cite[Definition 2.1]{kollar-flops}, with associated contraction $c$.
This implies the smoothness of the base of the contraction, yielding an identification of $f:J\lra \Sigma$ with the restriction of $c$ to $J$, see Proposition \ref{P=I}.

\medskip
Putting together all the results of the last two Sections, we obtain:

\begin{theorem}\label{thm sect 3.6}
The indeterminacy locus of $\varphi$ is the locus $J$ of schemes generating a codimension two linear space in $V_7$.
It is a $\PP^2$-fibration over the K3 surface $\Sigma$ and $\varphi$ is the Mukai flop associated to $J$.
\end{theorem}

\section{The linear system $|\mathcal{H}_3-2\delta|$} 

The extremal, flopping contraction $c:S^{[3]}\lra N$ coincides with the morphism 
$\phi_{|k(\mathcal{H}_3-2\delta)|}$ for $k\gg 0$ (see the proof of \cite[Theorem 8.1.3]{matsuki}). 
In this section we conduct an in-depth study of  the linear system $|\mathcal{H}_3-2\delta|$ itself, and show how it is related to the birational  involution $\varphi$.
We also study the direct sum map $\sigma: S^{[3]}\dashrightarrow \PP(V_7^\vee)$, showing that it is defined by a linear subsystem of $|\mathcal{H}_3-2\delta|$.

\medskip
We start by computing the dimension  of the linear system $|\mathcal{H}_3-2\delta|$. 
The decomposition (\ref{H2dec}) is 
orthogonal with respect to the Beauville-Bogomolov form $q_{BB}$, which restricts to the intersection 
form on $H^2(S,\ZZ)$, hence $q_{BB}(\xi_n)=\xi^2$. On the other hand, $q_{BB}(\delta)=-2(n-1)$ \cite[3.2.1]{debarre}.

\begin{prop}\label{9dim} $|\mathcal{H}_3-2\delta|\simeq \mathbb{P}^9$. 
\end{prop}

\proof 
We know that $\mathcal{H}_3-2\delta$ is nef by Proposition \ref{invariant is nef}. Moreover 
$$q_{BB}(\mathcal{H}_3-2\delta)=\mathcal{H}^2+4\delta^2=18+4\cdot (-4)=2.$$ 
Since the Fujiki constant of $S^{[3]}$ is $15$ according to \cite[3.2.1]{debarre}, this implies that 
 $(\mathcal{H}_3-2\delta)^6=15 q_{BB}(\mathcal{H}_3-2\delta)^3= 120$. In particular, 
the class $\mathcal{H}_3-2\delta$ is big as well. We can therefore invoke Kawamata-Viehweg's theorem and conclude that the 
number of sections is given by the Riemann-Roch polynomial. But on a hyperK\"ahler sixfold $X$ of K3-type we have 
$$\chi(X,\mathcal{L})=\binom{\frac{1}{2}q_{BB}(\mathcal{L})+4}{3}$$
by  \cite[3.3]{debarre},
hence the claim. \qed

\subsection{The secant variety and Pfaffian cubics}
Recall that $S$ is cut in $X_{ad}(G_2)$ by a codimension three linear space $\PP(V_{11})\subset \PP(\fg_2)$.
Let $ |I_3(Sec(S))|$ denotes the linear 
system of cubics of $\PP(V_{11})$ containing the secant variety $Sec(S)$.

\begin{prop}\label{def map}
There is a natural identification 
$$|\mathcal{H}_3-2\delta|\simeq |I_3(Sec(S))|.$$
Moreover, given a length three 
subscheme $Z$ of $S$, not in the base locus of the linear system, its image in $|I_3(Sec(S))|^\vee$ is the 
hyperplane of cubics containing the projective plane spanned by $Z$. 
\end{prop}

\medskip
\noindent\textit{Remark.} 
Since $\mathcal{H}$ is 3-very ample \cite[Theorem 1.1]{knutsen}, any length three subscheme $Z$ of $S$ spans a plane.

\proof First of all, since $\mathcal{H}_3$ is the pull-back, by the Hilbert-Chow morphism, of the line bundle $\mathcal{H}^{(3)}$ on $S^{(3)}$, one has 
$$ H^0(S^{[3]},\mathcal{H}_3)\simeq H^0(S^{(3)},\mathcal{H}^{(3)})\simeq Sym^3H^0(S,\mathcal{H}).$$
Explicitly, given a length three scheme $Z$ 
supported at $p_1+p_2+p_3$, the sum of three points of $S$, corresponding to three lines $\ell_1, \ell_2,
\ell_3\subset \fg_2$, the fiber of $\mathcal{H}^{(3)}$ at $p_1+p_2+p_3$
is $ \ell_1^\vee\otimes \ell_2^\vee\otimes \ell_3^\vee$. Then the cubic polynomial $P$ on $L$ defines a 
section of $\mathcal{H}_3$ whose evaluation at $Z$ is given by the polarization $\tilde{P}$ of $P$, restricted to $ \ell_1\otimes \ell_2\otimes \ell_3$. 

Since $2\delta$ is the class of the divisor parametrizing non-reduced schemes, we need to understand when this section vanishes on the locus of non-reduced schemes. But since the section is pulled-back from $S^{(3)}$, this simply means that $\tilde{P}(x_2,x_2,x_3)=0$ for any  $p_2=[x_2], p_3=[x_3]$ in $S$.
This implies that 
$$\begin{array}{rcl}
P(sx_2+tx_3) & = & s^3\tilde{P}(x_2,x_2,x_2)+3s^2t\tilde{P}(x_2,x_2,x_3)+ \\
 & & \hspace*{1cm} +3st^2\tilde{P}(x_2,x_3,x_3)+t^3\tilde{P}(x_3,x_3,x_3)=0
 \end{array}$$ for any $p_2=[x_2], p_3=[x_3]$ in $S$
and any scalars $s,t$, and the converse is also clearly true. 
This shows that $|\mathcal{H}_3-2\delta|$ identifies with the linear system of cubic polynomials $P$ vanishing on $Sec(S)$.

To prove the second claim, we can suppose that $Z=p_1\cup p_2\cup p_3$ is reduced. The secant variety of $S$ contains the three lines
$\overline{p_1p_2}$, $\overline{p_2p_3}$ and $\overline{p_3p_1}$, so the restriction to the plane
$\langle Z\rangle$ of a cubic polynomial vanishing on $Sec(S)$ is completely determined up to scalar.
If $p_1=[x_1], p_2=[x_2], p_3=[x_3]$, it is clear that $P$ vanishes on the whole plane if and only
if $\tilde{P}(x_1,x_2,x_3)=0$. This concludes the proof. \qed

\begin{coro}\label{factorization}
The map $\phi_{|\mathcal{H}_3-2\delta|}$ associated to  the linear system  $|\mathcal{H}_3-2\delta|$  factorizes through the involution $\varphi$.
\end{coro}

\proof Let $Z=p_1\cup p_2\cup p_3$ be reduced as well as $\varphi(Z)=p_4\cup p_5\cup p_6$. By definition of $\varphi$, for $Z$ general there exists a copy $\fs$ of $\fsl_3$ in $\fg_2$ such that 
the six points $p_1,\ldots , p_6$ belong to $\PP(\fs)$, hence to $\PP(\fs)\cap L$. But the latter is in general
a $\PP^4$, so the two planes $\langle Z\rangle$ and $\langle \varphi(Z)\rangle$  have to meet. Generically, we claim they will 
meet outside the lines of the two triangles defined by $Z$ and $\varphi(Z)$. \textcolor{black}{Indeed, since the planes defined by $p_1,p_2,p_3$ are in general position, those are the only rank two points in  
$\langle Z\rangle$ (where the rank refers to the usual rank 
as skew-symmetric tensors), and the other points on the three sides of the triangle are the only rank four points. Obviously the two planes
cannot meet at a rank two point since otherwise two of the points 
$p_1,\ldots, p_6$ would coincide. They cannot either meet at a rank four point, since then two sides of the two triangles, say $\overline{p_2p_3}$ and $\overline{p_4p_5}$ would meet; but then
$p_2,p_3,p_4,p_5$ would be coplanar, contradicting the fact that 
$p_2,p_3,p_4$ are the only rank two points inside  the plane they
generate. }

\begin{center}
    \begin{tikzpicture}
      \draw [thin] (0,0) -- (2,2) -- (4,1) -- (2,-1) -- (0,0);
      \draw [thin] (0,0) -- (-3.1,1.1) -- (-4,-1) -- (-1,-2) -- (0,0);
       \node[text width=4mm] at (2.5,1.4) {$p_5$};
       \node[text width=4mm] at (2.4,-.25) {$p_6$};
       \node[text width=4mm] at (.8,.3) {$p_4$};
      \draw [ultra thick] (1,0.2) -- (2.6,1.2); 
           \draw [ultra thick] (1,0.5) -- (2.5,0); 
                \draw [ultra thick] (2,-.5) -- (2.2,1.6); 
      \draw [ultra thick] (-.6,-.5) -- (-3,.3); 
           \draw [ultra thick] (-.6,0) -- (-2.6,-1); 
                \draw [ultra thick] (-2,-1.4) -- (-2.8,.7); 
        \node[text width=4mm] at (-3,.5) {$p_1$};
       \node[text width=4mm] at (-.4,-.25) {$p_2$};
       \node[text width=4mm] at (-2.4,-1.25) {$p_3$};
    \end{tikzpicture}
\end{center}

We conclude that a cubic polynomial
 vanishing on 
one triangle plus the span of the other triangle has to vanish on both spans. This proves the claim. \qed

\medskip\noindent {\it Remark}.
Here again there is a strong analogy with the conics on Gushel-Mukai fourfolds
\textcolor{black}{obtained as linear sections}
of copies of $G(2,4)$ inside $G(2,5)$. The linear system of quadratic equations of the Gushel-Mukai, 
which contains the Pl\"ucker quadrics as a hyperplane, restricts to a pencil on the plane  spanned by \textcolor{black}{such a conic}. 
So, exactly as before, containing this plane is just a codimension one condition on the linear system. 
See \cite{im} for more details.

\medskip
Since obviously $Sec(S)\subset Sec(X_{ad}(G_2))$, let us describe the latter. The secant variety is irreducible and reduced, and has dimension 10
according to \cite{kaji}.

\begin{lemma}\label{secsec}
 $Sec(X_{ad}(G_2))=Sec (G(2,V_7))\cap \PP(\fg_2)$. 
\end{lemma}

\proof The inclusion $Sec(X_{ad}(G_2))\subset Sec (G(2,V_7))\cap \PP(\fg_2)$ is obvious, and we only need to prove the reverse inclusion. Note that 
$$Sec (G(2,V_7))=\bigcup_{V_4\subset V_7}\PP(\wedge^2V_4),$$
so it is enough to check that $\PP(\wedge^2V_4\cap\fg_2)\subset 
Sec(X_{ad}(G_2))$ for each $V_4\subset V_7$. Recall that $\fg_2$
is the kernel of the contraction $\wedge^2V_7\lra V_7^\vee$ by the invariant three-form $\omega$. There are two cases. 

If $\omega_{|V_4}=0$, the composition $\wedge^2V_4\lra \wedge^2V_7\lra V_7^\vee$ factors through $V_4^\perp$, hence its kernel has dimension at least three and its intersection with $X_{ad}(G_2)$, being also the intersection with the Grassmannian, is a quadric hypersurface in 
$\PP(\wedge^2V_4\cap\fg_2)$, whose secants span the whole space.

If $\omega_{|V_4}\ne 0$, it is a nonzero decomposable three-form on $V_4$, hence corresponds to a line $V_1\subset V_4$. Then the kernel
of the composition $\wedge^2V_4\lra \wedge^2V_7\lra V_7^\vee\lra V_4^\vee$ is $V_1\wedge V_4$. Since $\PP(V_1\wedge V_4)\subset G(2,V_4)$, we conclude that  $\PP(\wedge^2V_4\cap\fg_2)$ is contained in $X_{ad}(G_2)$, and a fortiori also in its secant variety. \qed 

\medskip 
For a variety like $X_{ad}(G_2)$, which is cut-out by quadrics, the general expectation is that the secant variety 
should be cut out by cubics. Of course, there are many exceptions, typically when the codimension is not large enough. But this is 
the case for $G(2,V_7)$, whose secant variety consists in 
tensors of rank at most four (when 
considered as skew-symmetric forms on $V_7^\vee$), and is cut out 
by the $6\times 6$ Pfaffians. Restricting to $\fg_2$, we call
{\it Pfaffian cubics} those cubics defined, for $v\in V_7$, by
$$ P_v(x) = v\wedge x\wedge x\wedge x \in \wedge^7V_7\simeq\CC, \qquad x\in \fg_2\subset\wedge^2V_7.$$

\begin{prop}\label{pfaff cubics}
There is an exact sequence 
$$0\lra \cO_{\PP(\fg_2)}(-7)\lra V_7^\vee\otimes \cO_{\PP(\fg_2)}(-4)\lra \hspace*{5cm}$$
$$\hspace*{3cm}
\lra V_7\otimes \cO_{\PP(\fg_2)}(-3)\lra \cO_{\PP(\fg_2)}
\lra \cO_{Sec(X_{ad}(G_2))}\lra 0.$$
As a consequence, $I_3(Sec(X_{ad}(G_2)))=V_7$. 
\end{prop}

\proof Recall that $Sec(G(2,V_7))\subset \PP(\wedge^2V_7)$ is 
Cohen-Macaulay of codimension three \cite[Theorem 6.4.1]{weyman}. Lemma \ref{secsec}
shows that $Sec(G(2,V_7))\cap\PP(\fg_2)$ still has codimension three in $\PP(\fg_2)$, and therefore it is also Cohen-Macaulay. It is easy to check that the intersection of $Sec(G(2,V_7))$ with $\PP(\fg_2)$
is transverse at the general point of $Sec(X_{ad}(G_2))$, and since the latter is irreducible and $Sec(G(2,V_7))\cap\PP(\fg_2)$ is Cohen-Macaulay, this intersection must be reduced everywhere. Hence 
$Sec(X_{ad}(G_2))=Sec(G(2,V_7))\cap\PP(\fg_2)$ schematically. Moreover the restriction of the standard locally free resolution 
$$0\lra \cO_{\PP(\wedge^2V_7)}(-7)\lra V_7^\vee\otimes \cO_{\PP(\wedge^2V_7)}(-4)
\lra \hspace*{5cm}$$
$$\hspace*{3cm} \lra V_7\otimes \cO_{\PP(\wedge^2V_7)}(-3)\lra \cO_{\PP(\wedge^2V_7)}
\lra \cO_{Sec(G(2,V_7))}\lra 0$$
to $\PP(\fg_2)$ gives a resolution of the structure sheaf of $Sec(G(2,V_7))$, hence the first claim. We can then twist the resolution by $\cO_{\PP(\fg_2)}(3)$: the first two terms are clearly acyclic, so that taking cohomology we simply get the  exact sequence 
$$0\lra V_7\lra H^0(\cO_{\PP(\fg_2)}(3))\lra H^0(\cO_{Sec(G(2,V_7))}(3))\lra 0.$$ 
This implies the second claim. 
\qed

\subsection{The direct sum map}

Let $Z$ be a length three subscheme of $S$ outside $J$, see Section \ref{sec J}.
If $Z$ is reduced, it corresponds to three planes in general position in $V_7$. 
The hyperplane they generate is simply their direct sum, and we call the corresponding rational map from $S^{[3]}$ to 
$\PP(V_7^\vee)$ the direct sum map. 

\begin{prop}\label{maptohyperplanes}
The direct sum map is a rational map 
\begin{equation}\label{sum_map}
\sigma: S^{[3]}\dashrightarrow \PP(V_7^\vee),
\end{equation}
\textcolor{black}{which is regular outside $J$}, 
and generically finite  of degree $20$.
\end{prop}

\proof For a general hyperplane $V_6$ of $V_7$,  $S\cap G(2,V_6)$  is the union $W$ of six reduced points. These points define six planes in $V_7$ such that 
any three of them generate $V_6$ by Proposition \ref{Zdec}, since a dimension count shows that $V_6$ cannot contain any decomposing five-plane. So any scheme $Z$ in the preimage of $[V_6]$ by $\sigma$ must be the union of three reduced points from $W$, and we conclude that 
$\sigma$ is generically finite of degree $\binom{6}{3}=20$.
\qed

\medskip\noindent {\it \textcolor{black}{Remark.}} The direct sum map $\sigma$ is very similar to the one that, given a smooth quartic surface $T\subset\PP^3$, maps $T^{[2]}$ to $G(2,4)$ 
by sending a length two subscheme of $S$ to the line it generates in $\PP^3$. Obviously, this is a generically finite cover
of degree $6$ (finite if $T$ contains no line).

\medskip

Restricting Pfaffian cubics yields a natural inclusion $j:V_7\hookrightarrow I_3(Sec(S))$, see Proposition \ref{pfaff cubics}, hence a linear projection
$$j^t : \PP(I_3(Sec(S))^\vee)\dashrightarrow \PP(V_7^\vee).$$
This linear subsystem induces a rational map $S^{[3]}\dashrightarrow\PP(V_7^\vee)$. The next easy claim is that this coincides with the direct sum map.

\begin{lemma}\label{commuting triangle}
The following diagram commutes: 
\begin{equation}\label{TwoLinearSystems}
\xymatrix{ &  S^{[3]}\ar@{..>}[dl]_\sigma\ar@{..>}[dr]^{\phi_{|H_3-2\delta|}} & \\  \PP(V_7^\vee) & & \PP(I_3(Sec(S))^\vee).\ar@{..>}[ll]_{j^t}  }
\end{equation}
In particular, $\phi_{|H_3-2\delta|}:S^{[3]}\dashrightarrow \PP(I_3(Sec(S))^\vee)$ is generically finite of degree $d$ such that $d|20$.
\end{lemma}

\begin{proof}
The rational map $j^t$ sends a projective hyperplane $H$ of cubics to the hyperplane in $V_7$ consisting of the Pfaffian cubics $P_v$ that belong to $H$. Consider a general $x=p_1+p_2+p_3\in S^{[3]}$. Choose a basis $e_1,\ldots,e_7$ of $V_7$ 
such that the plane associated to $p_h$ is $\langle e_h,e_{h+3}\rangle$ for $h=1,2,3$. 
Then $x\wedge x\wedge x=6e_1\wedge e_4\wedge e_2\wedge e_5\wedge e_3\wedge e_6$, so $P_v(x)=0$  if and only if 
$v$ belongs to $\langle e_1,\ldots ,e_6\rangle$. This implies the first claim. The second claim follows, since $\sigma$ is generically finite of degree $20$. 
\end{proof}

\subsection{Base point freeness}\label{section bpf}

\begin{theorem}\label{basepointfree}
The linear system $|\mathcal{H}_3-2\delta|$ is base point free. 
\end{theorem}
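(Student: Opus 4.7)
The plan is a deformation argument, combining structural analysis of $\phi_{|H_3-2\delta|}$ with a reduction to a special $K3$ on which base-point-freeness is transparent.

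First I would extract as much as possible about the base locus $B\subset S^{[3]}$ directly. Since $\phi_{|H_3-2\delta|}$ factors through $\varphi$ by Corollary \ref{factorization}, $B$ is $\varphi$-invariant. Under the identification $|H_3-2\delta|\simeq |I_3(Sec(S))|$, a subscheme $Z$ belongs to $B$ exactly when the plane $\langle Z\rangle$ is contained in the common zero locus of all cubics through $Sec(S)$. A generic $Z$ is not a base point, since the $10$-dimensional space of such cubics (Proposition \ref{9dim}) cannot all vanish on an arbitrary $2$-plane; exhibiting one cubic not containing $\langle Z\rangle$ for generic $Z$ suffices. Hence $B$ is a proper closed subscheme of $S^{[3]}$, $\varphi$-invariant, and contained in the exceptional locus of the small contraction determined by the extremal class $H_3-2\delta$.

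The main step is to exclude $B\neq\emptyset$. For this I would use the fact that $(S^{[3]},H_3-2\delta)$ varies in a $19$-dimensional locally trivial family of marked hyperK\"ahler sixfolds of $K3$ type with polarization of square $2$, as $S$ varies in the moduli of $K3$ surfaces of genus $10$. Since $h^0(H_3-2\delta)=10$ is constant on the family by Proposition \ref{9dim}, base-point-freeness is an open condition on the base: if $\mathcal{L}$ is the family line bundle on the total space $\mathcal{X}\to T$, then the base locus $\mathcal{B}\subset\mathcal{X}$ is closed, its image in $T$ is closed by properness, and its complement is the locus where $|H_3-2\delta|$ is base point free. It therefore suffices to exhibit one member of the family on which $|H_3-2\delta|$ is base point free, and the result propagates to a Zariski open neighborhood, hence to the very general $S$.

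The most natural candidates for the special member are specializations where $S$ acquires higher Picard rank and $S^{[3]}$ (or the flopped model provided by Corollary \ref{flop}) admits a moduli-theoretic description on the derived-equivalent degree-two $K3$ surface $\Sigma$ of Theorem \ref{structure}. In such a description, $H_3-2\delta$ should pull back from a base-point-free class on a Bridgeland moduli space on $\Sigma$, for instance via the map to $\PP(L^\vee)$ underlying the double cover defining $\Sigma$. The main obstacle is carrying out this verification: one must identify the specialization explicitly and perform a lattice-theoretic computation in the Mukai lattice of $\Sigma$ to ensure that no wall intervenes between the special fiber and the chamber containing the generic $S$. Once that is in hand, openness of base-point-freeness yields the conclusion for the very general $K3$ of genus $10$.
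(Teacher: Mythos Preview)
Your overall strategy—deform in a family where $h^0$ is constant and use openness of base-point-freeness—is exactly the paper's approach. The difference, and the gap, is in the choice and handling of the special member.

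You propose to stay essentially within the family of genus-$10$ K3's, specializing $S$ to higher Picard rank and hoping that $|H_3-2\delta|$ becomes visibly base-point-free via some Bridgeland moduli interpretation on the HPD-dual surface $\Sigma$. You then admit that carrying this out is the main obstacle. That is where the proposal stops being a proof: you have not produced a single member on which base-point-freeness is actually established, and there is no reason to expect that any specialization of $S$ \emph{within} $\mathcal{K}_{18}$ makes $|H_3-2\delta|$ transparently base-point-free.

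The paper's solution is to leave the genus-$10$ family altogether. It takes a degree-$2$ K3 surface $T$ with $\mathrm{NS}(T)\simeq\langle 2\rangle\oplus\langle -2\rangle$ and the class $D_3$ on $T^{[3]}$ induced from the degree-$2$ polarization $D$. Base-point-freeness of $|D_3|$ is then elementary: $|D|$ sends $T$ to $\PP^2$, and $|D_3|$ is just the linear system of plane cubics evaluated on triples of points, which is manifestly base-point-free. The work is in connecting $(T^{[3]},D_3)$ back to $(S^{[3]},H_3-2\delta)$. This requires three lattice-theoretic ingredients you do not mention: (i) proving that the relevant Heegner divisor $\mathcal{D}_{18}$ in the period domain of degree-$2$ polarized $K3^{[3]}$-type manifolds is irreducible; (ii) showing that the periods of $(S^{[3]},H_3-2\delta)$ are dense in $\mathcal{D}_{18}$ and that $(T^{[3]},D_3)$ also lands in $\mathcal{D}_{18}$; and (iii)—the most delicate point—arguing that when one deforms $(T^{[3]},D_3)$ to a nearby $(S^{[3]},\Theta)$ with $S$ of Picard rank one, the resulting base-point-free, hence movable, degree-$2$ class $\Theta$ must coincide with $H_3-2\delta$. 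This last step uses that any such $\Theta$ would produce a birational involution of $S^{[3]}$ by reflection, and by Theorem~1 the only birational involution is the one fixing $H_3-2\delta$.

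Your preliminary remarks about $B$ being $\varphi$-invariant and contained in the exceptional locus are not used in the paper's argument, and the containment in the exceptional locus is not obvious without further work (indeed the paper only proves $I\subset J$ later, using base-point-freeness as input).
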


The proof of Theorem \ref{basepointfree}
that we are going to present, after a few technical preliminaries,  is entirely due to
an anonymous referee. It  will use results  from sections \ref{svb} and \ref{sec J}.

Our initial proof relied on a completely different, and more
involved, deformation argument.
\textcolor{black}{We took a well-chosen $2$-polarized K3 surface $(T,\mathcal{D})$ such that a general deformation, along a divisor in the moduli space, of the pair $(T^{[3]},\mathcal{D}_3)$ is of the form $(S^{[3]},\mathcal{H}_3-2\delta)$ for $(S,\mathcal{H})$ one of our $10$-polarized K3 surfaces. Base point freeness is clear for $|\mathcal{D}_3|$ on $T^{[3]}$, and we deduced base point freeness for $|\mathcal{H}_3-2\delta|$ on $S^{[3]}$, since this is an open condition in a family of polarized manifolds $(X,\mathcal{D})$ such that $h^0(X,\mathcal{D})$ remains constant.}

\smallskip 

Let us denote by $i: J\hookrightarrow S^{[3]}$ the inclusion morphism.
Recall that $J$ was defined in Section \ref{sec J} as the degeneracy locus of the morphism 
$$
\phi: V_7^\vee\otimes \cO_{S^{[3]}}\lra p_*(q^*\cU_2^\vee).
$$

\begin{prop}\label{exactsequence}
There is an exact sequence
$$0\lra p_*(q^*\cU_2^\vee)^\vee\stackrel{\phi^\vee}{\lra} V_7\otimes \cO_{S^{[3]}}\lra
\cO_{S^{[3]}}(\mathcal{H}_3-2\delta)\lra i_*i^*\cO_{S^{[3]}}(\mathcal{H}_3-2\delta)\lra 0.$$
\end{prop}

\proof 
\medskip
Outside $J$, the morphism $\phi$ is surjective. So the kernel of $\phi$ is reflexive of rank one, hence a line bundle. 
Since $J$ has codimension two by Corollary \ref{J irred}, 
we can compute this line bundle by taking determinants. We get
\begin{equation}\label{KerPhi}\mathrm{Ker}(\phi)\simeq \det  p_*(q^*\cU_2^\vee)^\vee\simeq \cO_{S^{[3]}}(2\delta-\mathcal{H}_3)
\end{equation}
(see \cite[Lemma 1.5]{wandel} for the latter isomorphism).

On the other hand, the cokernel of $\phi$ is supported on $J$, and the corank is never bigger than one by Proposition \ref{cork psi}. So $\mathrm{Coker}(\phi)\simeq i_*\cL$ for some line bundle $\cL$ on $J$, and we get an exact sequence
$$0\lra \cO_{S^{[3]}}(2\delta-\mathcal{H}_3)\lra V_7^\vee\otimes \cO_{S^{[3]}}\lra p_*(q^*\cU_2^\vee)
\lra i_*\cL\lra 0.$$
Since $J$ has codimension two in $S^{[3]}$, ${\mathcal Hom}(i_*\cL,\cO) ={\mathcal Ext}^1(i_*\cL,\cO) = 0$. Dualizing the sequence above then yields 
the exact sequence
$$0\lra p_*(q^*\cU_2^\vee)^\vee\lra V_7\otimes \cO_{S^{[3]}}\lra \cO_{S^{[3]}}(\mathcal{H}_3-2\delta)
\lra {\mathcal Ext}^2(i_*\cL,\cO)\lra 0.$$
Finally, the last term is a line bundle on $J$, and therefore the surjectivity of the previous arrow implies that this line bundle is isomorphic to the restriction of
$\cO_{S^{[3]}}(\mathcal{H}_3-2\delta)$. \qed

\medskip
Now, recall from diagram (\ref{RC}) that the family $\cR$ parametrizing cubic scrolls and cones in $X_{ad}(G_2)$ is an incidence correspondence 
\begin{equation}\label{R}
\xymatrix{& \cR\ar[ld]_{\tilde{p}}\ar[rd]^{\tilde{q}} & \\
LieGr(2,V_7) & & X_{ad}(G_2).}
\end{equation}
Moreover, $\cR$ is contained is the family $\cC$ of Schubert cycles 
over $LieGr(2,V_7)$, parametrizing planes in the family $\cV_5$ of decomposing five-planes, that meet their axis from the family $\cA_2$. 
Inside  the incidence correspondence 
\begin{equation}\label{GrLie}
\xymatrix{& Gr_{LieGr(2,V_7)}(2,\cV_5)\ar[ld]_{\hat{p}}\ar[rd]^{\hat{q}} & \\
LieGr(2,V_7) & & G(2,V_7),}
\end{equation}
$\cC$ is the degeneration scheme of the natural morphism
$$\hat{q}^*\cU_2\hookrightarrow \hat{p}^*\cV_5\rightarrow \hat{p}^*(\cV_5/\cA_2). $$
Hence we have the following  Eagon-Northcott resolution 
where $\textcolor{black}{\cH}$ is the pullback of the hyperplane divisor by $\hat{q}$: 
\begin{equation}\label{resC}
0\lra \hat{q}^*\cU_2(-\textcolor{black}{\mathcal{H}})\lra \hat{p}^*(\cV_5/\cU_2)\otimes \cO(-\textcolor{black}{\mathcal{H}})\lra \cO\lra \cO_\cC\lra 0.
\end{equation}
Next, recall from the end of subsection \ref{svb} that $\cR$ is obtained by intersecting $\cC$ with $\PP_{LieGr(2,V_7)}(\cK_5)$  inside $\PP_{LieGr(2,V_7)}(\cA_2\wedge\cV_5)$. Equivalently, it is the 
zero locus of the tautological section of ${\mathcal Hom}(O(-\mathcal{H}),
\hat{p}^*((\cA_2\wedge\cV_5)/\cK_5))$ on $\cC$. Diagram 
\eqref{vb-diagram}
shows that the bundle $ (\cA_2\wedge\cV_5)/\cK_5$ is isomorphic to $\cU_2$, so $\cR$ is defined in $\cC$ as the zero locus of a global section of
 $\hat{p}^*\cU_2\otimes \cO(\textcolor{black}{\mathcal{H}})$;  hence the Koszul resolution
\begin{equation}\label{resR}
0\lra \det (\hat{p}^*\cU_2^\vee)\otimes \cO(-2\textcolor{black}{\mathcal{H}})\lra \hat{p}^*\cU_2^\vee\otimes \cO(-\textcolor{black}{\mathcal{H}})\lra \cO_\cC\lra \cO_\cR\lra 0.
\end{equation}

\begin{lemma}\label{pushforward}
$$\mathbf{R}\tilde{p}_*(\tilde{q}^*\cU_2^\vee(-t\textcolor{black}{\mathcal{H}})_{|\cR})\simeq\left\{
\begin{array}{ll}
    \cV^\vee_5 & \mathrm{if}\; t=0, \\
    0 & \mathrm{if}\; t=1, \\
    \det(\cV_2)^\vee\otimes\det(\cV_5)^{\otimes 2}[-2] & \mathrm{if}\; t=2.
\end{array}
\right. $$
\end{lemma}

As usual the shift notation $[-2]$ means that the corresponding term appears in cohomological degree two, while one gets zero in the other degrees.

\proof First twist (\ref{resC}) by  $\hat{q}^*\cU_2^\vee(-t\textcolor{black}{\mathcal{H}})$ and use the relative 
Borel-Weil-Bott theorem to deduce that 
$$\mathbf{R}\hat{p}_*(\hat{q}^*\cU_2^\vee(-t\textcolor{black}{\mathcal{H}})_{|\cR})\simeq\left\{
\begin{array}{ll}
    \cV^\vee_5 & \mathrm{if}\; t=0, \\
    0 & \mathrm{if}\; 1\le t\le 3, \\
    \det(\cV_5)^{\otimes 2}[-4] & \mathrm{if}\; t=4.
\end{array}
\right. $$
Then use (\ref{resR}) to complete the computation. \qed 

\medskip

Recall that $J$ comes with a natural map $\pi:J\lra \PP^2$, defined in \eqref{J}.

\begin{lemma}\label{isoP2}
One has $i^*\cO_{S^{[3]}}(\mathcal{H}_3-2\delta)\simeq \pi^* \cO_{\PP^2}(1)$.
\end{lemma}

\proof Since the morphism $p: \cZ\lra S^{[3]}$ in \eqref{universal-scheme} is flat, we can use base change to get, from \eqref{KerPhi},
$$i^*\cO_{S^{[3]}}(\mathcal{H}_3-2\delta)\simeq i^*\det  p_*(q^*\cU_2^\vee)\simeq 
\det  p_*(q^*{\cU_2^\vee}_{|\cZ_J}),$$
where $\cZ_J=p^{-1}(J)$. We will use the identification $J\simeq\cD(\psi)$ from Lemma \ref{JisDpsi} to 
compute the right hand side. Indeed, note that the family $\cR$ of cubic scrolls and cones
is contained in $\PP_{LieGr(2,V_7)}(\cK_5)$ and that 
$$\cZ_J\simeq \cR\times_{\PP_{LieGr(2,V_7)}(\cK_5)}\PP_J(\mathrm{Ker}(\psi_{J})).$$
In words, the subscheme $Z\subset S$ of length three corresponding to a point of $J$ is equal to the intersection of the corresponding cubic scroll or cone $\PP(K_5)\cap X_{ad}(G_2)$ with the plane $\PP(\mathrm{Ker}(K_5\lra\fg_2/V_{11}))\subset\PP(K_5)$.
But by definition of $\psi$, we have 
$$\mathrm{Coker}(\psi_{|J})=\pi^* \cO_{\PP^2}(1)\quad \mathrm{and} \quad \mathrm{Ker}(\psi_{J})=\mathrm{Ker}(
\cK_5\lra\pi^*(\Omega(1))).$$ 
Hence the Koszul resolution 
$$0\lra \cO_{\cR_J}(-2\textcolor{black}{\mathcal{H}})\otimes\pi^* \cO_{\PP^2}(1)\lra  \cO_{\cR_J}(-\textcolor{black}{\mathcal{H}})\otimes
\pi^*(\cT(-1))\lra \cO_{\cR_J}\lra \cO_{\cZ_J}\lra 0, $$
where $\cR_{J}=\cR\times_{LieGr(2,V_7)}J$ and $\textcolor{black}{\mathcal{H}}$ is again the pullback of the hyperplane class of
$X_{ad}(G_2)$. Now we can tensorize this exact sequence by $\tilde{q}^*\cU_2^\vee$
and push it forward to $J$. Using Lemma \ref{pushforward}, we get the exact sequence
$$0\lra \cV^\vee_{5|J}\lra 
p_*(q^*{\cU^\vee_2}_{|\cZ_J})\lra 
(\det(\cV_2)^\vee\otimes\det(\cV_5)^{\otimes 2})_{|J}\otimes\pi^*\cO_{\PP^2}(1)\lra 0.$$
Taking determinants, and using (\ref{det}), we deduce the Lemma.\qed

\medskip\noindent {\it Proof of Theorem \ref{basepointfree}.}
Combining Proposition \ref{exactsequence} and Lemma \ref{isoP2}, we get 
the exact sequence 
\begin{equation}\label{sequence-bpf}
0\lra p_*(q^*\cU_2^\vee)^\vee\lra V_7\otimes \cO_{S^{[3]}}\lra \cO_{S^{[3]}}(\mathcal{H}_3-2\delta)
\lra i_*\pi^* \cO_{\PP^2}(1)\lra 0.
\end{equation}
We claim that $|\pi^* \cO_{\PP^2}(1)|\simeq |\cO_{\PP^2}(1)|=\PP^2$. Indeed, the Stein 
factorization of $\pi$ described in Corollary \ref{J irred} yields 
$$\pi_*\cO_J=g_*\cO_{\Sigma}=\cO_{\PP^2}\oplus \cO_{\PP^2}(-3),$$
since $g$ is a degree two cover branched over a sextic curve. We thus get an exact sequence 
\begin{equation}\label{V7'}
    0\lra V_7\lra H^0(\cO_{S^{[3]}}(\mathcal{H}_3-2\delta))\lra H^0(\cO_{\PP^2}(1))\lra 0,
\end{equation}
where a dimension count provides the surjectivity of the rightmost map, see Proposition \ref{9dim}. 
This means that every global section of 
$\pi^* \cO_{\PP^2}(1)$ can be lifted to a global section of $\cO_{S^{[3]}}(\mathcal{H}_3-2\delta)$,
and since the former is base point free, we can conclude from (\ref{sequence-bpf})
that the latter is also base point free. \qed

\medskip
Recall from Lemma \ref{base irr} that the indeterminacy locus $J$ of $\varphi$ is also the exceptional locus of the extremal contraction $c:S^{[3]}\lra N$, induced by a multiple of $\mathcal{H}_3-2\delta$. Since $|\cH_3-2\delta|$ itself is base point free, by \cite[Lemma 2.1.28]{lazarsfeld}, $c$ is also
the first map of the Stein factorization of $\phi_{|\cH_3-2\delta|}$, 
\begin{equation}\label{stein phi}
\xymatrix{ 
   S^{[3]}\ar[rr]^{\phi_{|\cH_3-2\delta|}}\ar[dr]_{c} & &  |\cH_3-2\delta|^\vee \\
 & N\ar[ur]_{\nu}  &  }
\end{equation}

In the next section we will compute the degree of $\phi_{|\cH-2\delta|}$, hence of $\nu$.

In $|\mathcal{H}_3-2\delta|^\vee$ there is a distinguished plane $\PP(V_7^\perp)$, of 
codimension one linear subsystems containing the Pfaffian cubics.
It is the center of the projection $j^t$ from Lemma \ref{commuting triangle}.

\begin{prop}\label{J in P2}
\textcolor{black}{
The image of $J$ via $\phi_{|\mathcal{H}_3-2\delta|}$ is $\PP(V_7^\perp)$.}
\end{prop}

\begin{proof}
A length three scheme $Z$ corresponding to a point in $J$ is contained in $G(2,V_5)$ for some $V_5\subset V_7$. Since  a form in five variables has rank at most four, any Pfaffian cubic 
vanishes on $\langle Z \rangle\subset \PP(\wedge^2 V_5)$.  Hence the claim by Proposition \ref{def map}.
\end{proof}

\noindent {\it Remark}. Here again, the situation is in some sense close to that of Gushel-Mukai varieties in $G(2,V_5)$. The latter Grassmannian is cut out in $\PP(\wedge^2V_5)$
by the Pl\"ucker quadrics,  parametrized by $V_5$ itself. 
A Gushel-Mukai variety is cut out by 
the Pl\"ucker quadrics plus one extra, general quadric. Here, the linear system of cubics 
containing $Sec(S)$ is made of the Pfaffian cubics, plus three extra cubics that remain mysterious. 
Equivalently, the projective plane $\PP(V_7^\perp)$ which is the center of the projection $p_{V_7^\vee}$
remains elusive.

\subsection{Computation of the degree}
Let us finally  determine the degree $d$ of $ \phi_{|\mathcal{H}_3-2\delta|}$. By Corollary \ref{factorization}, 
$d$ is even, and by Lemma \ref{commuting triangle}, $d$ divides $20$. So $d=2,4,10$ or $20$. 
We will exclude the three last cases thanks to  some simple combinatorics.

\begin{prop}\label{degree fiber}
    $d=2$.
\end{prop}

\proof Let us exclude the 
other possibilities. Recall that
for the generic fiber of $\sigma$ there are six distinct points $\lbrace p_i\rbrace_{i=1,\ldots,6}$ such that the fiber is $\{ p_i+p_j+p_k \}_{i<j<k}$. Thus, at the generic point $p_1+p_2+p_3$ of $S^{[3]}$ we have another point $p_4+p_5+p_6$ of $S^{[3]}$ in the same fiber of $\phi_{|H_3-2\delta|}$, and any other point in that fiber must be of the form $p_i+p_j+p_k$ and come with its complement $p_\ell+p_m+p_n$, 
where $\{i,j,k\}\cap \{\ell,m,n\}=\emptyset$. To simplify notations, we will denote these pairs by $(123|456)$ and $(ijk|\ell mn)$, where we can permute
the two triples and the three integers in each triple. 

\smallskip\noindent {\bf $d=4$.} Up to permutation of the indices, we can 
always suppose that the four points in the fiber are given by the 
triples $(123|456)$ and $(124|356)$. Then each triple contains a unique pair of points \textcolor{black}{(either $(12)$ or $(56)$)} that is also contained in another triple. This means that we would be able to split the generic point of $S^{[3]}$ as the sum of a point of $S^{[2]}$ and a point of $S$. But then the universal family $\cZ\lra S^{[3]}$ would admit a rational section, which is impossible since $\cZ$ is irreducible.

\smallskip\noindent {\bf $d=10$.} We will use the same idea as before, although this case is a bit more involved combinatorially. We first remark that up to permuting the indices, there are only three ways to choose $5$ pairs of complementary triples among six indices. We leave the following lemma to the reader. 

\begin{lemma}
 Up to permuting indices, any $5$-tuple of  complementary triples of indices between $1$ and $6$ is equivalent to one of the following:
 $$\begin{array}{ccccc}
 (123|456) &\qquad & (123|456) &\qquad &  (123|456) \\
 (134|256) && (124|356) && (124|356) \\
 (145|236) && (125|346) && (134|256) \\
 (156|234) && (126|345) && (135|246) \\
 (126|345) && (156|234) && (145|236)
 \end{array}$$
\end{lemma}

A nice combinatorial gadget in order to distinguish these $5$-tuples is to associate to them a little graph by using the previous remark that each time 
we have two pairs of complementary triples, we can arrange them in the 
form $(ijk|\ell mn)$ and $(ij\ell | kmn)$. In particular, the pair $(k\ell)$
is distinguished. So each $5$-tuple yields ten such pairs, which we can visualize as the edges of a graph. We get the three following graphs:

\small
\vspace{-5mm}
\begin{center}
$$\hspace*{-6mm}\begin{array}{ccccc}
    \begin{tikzpicture}[scale = 1.2]
      \draw [thin] (.1,0) -- (1.9,0) -- (2.5,1.5) -- (1,2.6) -- (-.5,1.5) -- (.1,0);
       \draw (1,1.2) node[below]{$1$};
       \draw (.1,0) node[below]{$2$};
       \draw (1,2.6) node[above]{$3$};
       \draw (1.9,0) node[below]{$4$};
       \draw (-.5,1.5) node[left]{$5$};
       \draw (2.5,1.5) node[right]{$6$};
      \draw (1,1.2) -- (1,2.6);
      \draw (1,1.2) -- (2.5,1.5);
      \draw (1,1.2) -- (-.5,1.5);
      \draw (1,1.2) -- (.1,0);
      \draw (1,1.2) -- (1.9,0);
    \end{tikzpicture}
     & \quad &
    \begin{tikzpicture}[scale = .8]
      \draw [thin] (0,0) -- (1.73,1) -- (1.73,3) -- (0,4) -- (-1.73,3) -- (-1.73,1) -- (0,0);
      \draw [thin] (1.73,3)  -- (-1.73,3) -- (1.73,1);
      \draw [thin] (1.73,3)  -- (-1.73,1) -- (1.73,1);  
      \draw  (0,4) node[above]{$1$};
       \draw (0,0) node[below]{$2$};
       \draw (-1.73,3) node[left]{$3$};
       \draw (1.73,3) node[right]{$4$};
       \draw (-1.73,1) node[left]{$5$};
       \draw (1.73,1) node[right]{$6$};
      \end{tikzpicture}
         & \quad & 
    \begin{tikzpicture}[scale = .8]
      \draw [thick] (0,1) -- (3,1);
      \draw [thick] (0,2.5) -- (3,2.5);
      \draw [thick] (0,4.5) -- (3,4.5);
            \draw [thin] (0,2.5)  -- (0,4.5) -- (3,2.5);
      \draw [thin] (3,2.5)  -- (3,4.5) -- (0,2.5); 
      \draw (0,1) node[left]{$1$};
       \draw (0,2.5) node[left]{$2$};
       \draw (0,4.5) node[left]{$3$};
       \draw (3,4.5) node[right]{$4$};
       \draw (3,2.5) node[right]{$5$};
       \draw (3,1) node[right]{$6$};
      \end{tikzpicture}
\end{array}$$
\end{center}
\normalsize

\medskip\noindent
\textcolor{black}{(Beware that on the rightmost graph, the three horizontal edges are double and represented by a thick line).}
In particular the six points do not have the same combinatorial properties, and in each case it is easy to see that in each triple, one point can be distinguished. (In the first graph the vertex $1$ is clearly special, which gives one distinguished point
in five of the ten triples; in the remaining ones like $(456)$, only one vertex, here $5$, is not connected to the other ones and can be distinguished. In the second graph, $1$ and $2$ are special, allowing to distinguish one vertex in each triple that contains one of them or both; in the remaining triples, two vertices are connected to the same special vertex, and the other one is distinguished. 
In the third graph, $1$ and $6$ are special and give a distinguished point in each triple.)
Exactly as in the previous case, we would thus be able to construct a rational section of the universal family $\cZ\lra S^{[3]}$, 
which is impossible.

\smallskip\noindent {\bf $d=20$.} This means that all the triples $p_i+p_j+p_k$ belong to the fiber. 
We denote by $\pi_{ijk}$ the projective plane in $L$ spanned by $p_i,p_j,p_k$.
By Proposition \ref{def map}, we conclude that there is a hyperplane in 
$I_3(Sec(S))$ consisting of cubics that vanish on the $20$ planes 
$\pi_{ijk}$. Recall that the six points $p_1,\ldots ,p_6$ span the
linear space $\PP(\fs)\cap L\subset\PP(\fg_2)$, where $\fs\simeq\fsl_3$, which is a 
$\PP^4$, see the proof of Corollary \ref{factorization}. \textcolor{black}{If the corresponding planes in $V_7$ span the hyperplane $V_6$, this is also }
$\PP(\wedge^2V_6)\cap L\subset \PP(\wedge^2V_7)$. \textcolor{black}{ 
An explicit computation in coordinates shows that} a cubic on this $\PP^4$ vanishing on the $20$ planes must vanish identically. We conclude that the linear system
$|I_3(Sec(S))|$ reduces on $\PP(\fs)\cap L$ to a unique cubic. But since 
the projective four spaces $\PP(\wedge^2V_6)\cap L$ cover \textcolor{black}{an open subset of } 
$L$ \textcolor{black}{(because any element of $\wedge^2V_7$ has rank at most six)}, this would mean that the linear 
system itself reduces to a single cubic, which is absurd. The proof is complete. \qed

\begin{coro}\label{phi is covering inv}
$\varphi$ is the covering involution associated to  $\phi_{|\mathcal{H}_3-2\delta|}$.
\end{coro}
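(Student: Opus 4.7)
The plan is to simply combine the two pieces of information already assembled: the degree computation just completed and Corollary \ref{factorization}. There is essentially no further obstacle; the statement is the natural byproduct of these two facts together with the non-triviality of $\varphi$.

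First I would record that, since $\phi_{|H_3-2\delta|}$ is generically finite of degree $2$ onto its image, its covering (deck) involution is well-defined as a birational involution $\tau$ of $S^{[3]}$: on a dense open set $U$ where the map is a finite étale double cover, $\tau$ is the unique non-trivial automorphism of $U$ over the image, swapping the two points of each fiber. Then I would invoke Corollary \ref{factorization}, which tells us $\phi_{|H_3-2\delta|}\circ\varphi = \phi_{|H_3-2\delta|}$ as rational maps; equivalently, for a generic subscheme $Z$ the two points $Z$ and $\varphi(Z)$ lie in the same fiber of $\phi_{|H_3-2\delta|}$.

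Next, by Theorem \ref{invo}, $\varphi$ is a non-trivial birational involution of $S^{[3]}$, so $\varphi(Z)\ne Z$ for generic $Z$. Since the general fiber of $\phi_{|H_3-2\delta|}$ has cardinality exactly $2$, the pair $\{Z,\varphi(Z)\}$ must constitute the entire fiber. This forces $\varphi$ to coincide with $\tau$ on a dense open set, hence as a birational map, which is exactly the content of the corollary.

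The only subtlety worth flagging is that one should verify that $\phi_{|H_3-2\delta|}$ really is a morphism (not merely a rational map) on the locus where the above argument is applied; this is guaranteed by Theorem \ref{basepointfree} (base-point-freeness of $|H_3-2\delta|$), so the degree-$2$ factorization makes sense globally as a generically finite morphism and the deck involution is well-defined birationally. No calculation is needed beyond what has already been carried out.
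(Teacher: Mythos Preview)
Your argument is correct and matches the paper's reasoning exactly: the corollary is stated there without proof, as an immediate consequence of the degree computation $d=2$ together with Corollary~\ref{factorization} and the non-triviality of $\varphi$. Your write-up simply spells out the details the paper leaves implicit.
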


\subsection{Odds and ends}
We conclude the paper with a few observations.

\subsubsection*{Deformations}\label{defo}

The variety  $N$ is normal and $\varphi$ descends to an involution $\bar{\varphi}$ of $N$ which is regular, since it fixes the ample class whose pullback via $c$ is a multiple of $\mathcal{H}_3-2\delta$ \cite[Theorem 2.1.27]{lazarsfeld}. The quotient $N/\langle \bar{\varphi}\rangle$ is still normal,
hence isomorphic to the normalization of $ \phi_{|\mathcal{H}_3-2\delta|}(S^{[3]})$. This is summarized in the 
following diagram:
\begin{equation}\label{diagram summarize}
\xymatrix{S^{[3]}\ar[rr]^{ \phi_{|\mathcal{H}_3-2\delta|}}\ar[rd]_c\ar@{-->}@(l,d)_{\varphi} & &  \phi_{|\mathcal{H}_3-2\delta|}(S^{[3]}) \\
 & N\ar[ru]_{\nu}\ar@(l,d)_{\bar{\varphi}}\ar[rd] & \\
 & & N/\bar{\varphi}\ar[uu]_{\bar{\nu}}}
\end{equation}

\medskip\noindent {\it Remark.} Note the analogy with the O'Grady involution, which yields a similar picture where the map $c:S^{[2]}\to N$ contracts a $\PP^2$, $N$ is a double EPW sextic singular in $c(\PP^2)$ which is the inverse image of the Pl\"ucker point, and $N\to \phi_{|\mathcal{H}_3-2\delta|}(S^{[2]})$ is the double cover 
$\tilde{Y}_A\to Y_A$ of a special EPW sextic \cite{ogrady1}. This is a situation where  
$Y_A=\phi_{|\mathcal{H}_3-2\delta|}(S^{[2]})$ is normal (although  $Y_A^{[3]}\neq \emptyset $ does not have 
the expected dimension). 

\smallskip
In our situation, we do not know whether $\bar{\nu}$ is an isomorphism. But pursuing the analogy with 
EPW sextics, it would be tempting to imagine that the double cover $\nu : N\ra \phi_{|\mathcal{H}_3-2\delta|}(S^{[3]})$
can be deformed to a locally complete family of polarized hyperK\"ahler sixfolds. This very natural question is also discussed in \cite{kkm}. 

\subsubsection*{On the contact structure of $X_{ad}(G_2)$}

Since $X_{ad}(G_2)$ is an adjoint variety, it is well known that it admits an algebraic contact structure (and a famous conjecture by  Lebrun and Salamon asserts that all contact structures on Fano manifolds of Picard number one arise from adjoint varieties). Although we did not insist on this, there is an interesting connection between the contact structure on $X_{ad}(G_2)$ and our constructions, which roughly goes as follows. 
 
 Recall from Corollary \ref{decomp=seminull} that $LieGr(2,V_7)\simeq OG(2,V_7)$ parametrizes decomposing five planes 
 $V_5\subset V_7$. In section $2.5$ we have constructed a rank five  subbundle $\cK_5$  of the trivial bundle with fiber $\fg_2$ over $LieGr(2,V_7)$. 
 By  item (2) of Proposition \ref{constant rank}, its fibers are the affine spans of the corresponding cubic scrolls. One can prove that this bundle $\cK_5$ restricted to $X_{ad}(G_2)$ is nothing else than the affine version of the contact vector bundle (sitting inside the affine tangent bundle exactly as the contact hyperplane bundle sits inside the tangent bundle). Moreover, the space of global sections of $\cK_5^\vee$ is isomorphic with $\fg_2$, and a generic section vanishes along two anticanonically embedded Veronese surfaces. So we recover the  two Veronese surfaces of Proposition \ref{two veronese} in a completely different way. 

 Considering the relative zero locus over the projectivized space of sections, and its Stein factorization, we get a double cover of $\PP(\fg_2)$, branched over the sextic hypersurface which is projectively dual to $X_{ad}(G_2)$. This double cover plays a crucial rôle in Homological Projective Duality for the $G_2$-Grassmannian, as developed in \cite{kuznetsov-hpd}.
 It was already observed in loc. cit. that this double cover factorizes the so-called Grothendieck-Springer simultaneous resolution. Ultimately, it is related to the classical fact that a del Pezzo surface of degree six can be represented as the blowup of the projective plane at three points, in exactly two different ways.

    \bibliographystyle{amsplain}

\bibliography{main.bib}

\end{document}